\newcommand{\C}{\mathbb C}
\renewcommand{\Im}{\mathop {\rm Im}\nolimits}
\newcommand*{\clos}[1]{\overline{#1}}
\newtheorem{theorem}{Theorem}
\newtheorem{lemma}[theorem]{Lemma}
\begin{document}
\date {\today}
\title{Spectral sets: numerical range and beyond}

\author{Michel Crouzeix\footnote{Univ.\,Rennes, CNRS, IRMAR\,-\,UMR\,6625, F-35000 Rennes, France.
email: michel.crouzeix@univ-rennes1.fr},  Anne Greenbaum\footnote{University of Washington,
Applied Math Dept., Box 353925, Seattle, WA 98195.  email:  greenbau@uw.edu}
}

\maketitle

\begin{abstract} 
We extend the proof in [M.~Crouzeix and C.~Palencia, {\em The numerical range is a $(1 + \sqrt{2})$-spectral
set}, SIAM Jour.~Matrix Anal.~Appl., 38 (2017), pp.~649-655] to show that other regions in the
complex plane are $K$-spectral sets.  In particular, we show that various annular regions
are $(1 + \sqrt{2} )$-spectral sets and that a more general convex region with a circular hole 
or cutout is a $(3 + 2 \sqrt{3} )$-spectral set.
We demonstrate how these results can be used to give bounds on the 
convergence rate of the GMRES algorithm for solving linear systems and on that of rational 
Krylov subspace methods for approximating $f(A)b$, where $A$ is a square matrix, $b$ is a given vector, and $f$ is a function that can be uniformly approximated 
on such a region by rational functions with poles outside the region.
\end{abstract}

\paragraph{2000 Mathematical subject classifications\,:}47A25 ; 47A30

\noindent{\bf Keywords\,:}{ numerical range, spectral set}

\section{Introduction}
Let us consider a closed subset $X \subset\C$ of the complex plane and a bounded linear operator $A$ in a complex Hilbert space $(H, \langle, \rangle, \| \,\|)$. We will say that $X$ is a $K$-spectral set for 
$A$ if the spectrum of $A$ is contained in $X$ and if the following inequality
\begin{equation}
\label{eq1}
 \|f(A)\|\leq K\sup_{z\in X}|f(z)|,
\end{equation}
holds for all rational functions $f$ bounded in $X$. Note that $f(A)$ is naturally defined for such $f$ since, being bounded, $f$  has no pole in $X$. Let us denote by $\mathcal A(X)$ the set of
uniform limits in $X$ of  bounded rational functions; then,  by continuity, this inequality allows us to define $f(A)$ for $f\in \mathcal A (X)$ and inequality \eqref{eq1} still holds.

Now we consider a (non empty) bounded open subset $\Omega\subset \C$; we assume that its boundary $\partial \Omega$ is rectifiable and has a finite number of connected components. Then, if $A$ is a bounded linear operator with spectrum Sp$(A)$ contained in $\Omega$, it follows from the Cauchy formula that \eqref{eq1} holds with $X$ being the closure of $\Omega$ and $K=\frac{1}{2\pi }\int_{\partial \Omega}\|(\sigma I{-}A)^{-1}\|\,|d\sigma |$. 
But, this estimate is often very pessimistic, and we are looking for a better one. 
For that, we start with a rational function $f$ (bounded in $\Omega$) and we will consider the Cauchy formulae (for $z\in \Omega$)
\[
f(z)=\frac1{2\pi i}\int_{\partial \Omega}f(\sigma)\frac{d\sigma }{\sigma -z}, \quad f(A)=\frac1{2\pi i}\int_{\partial \Omega}f(\sigma)(\sigma I{-}A)^{-1}d\sigma .
\]
We will also introduce the Cauchy transforms of the complex conjugates of $f$
\begin{equation}\label{eq2}
g(z):=\frac1{2\pi i}\int_{\partial \Omega}\clos{f(\sigma)}\frac{d\sigma }{\sigma -z}, \quad g(A):=\frac1{2\pi i}\int_{\partial \Omega}\clos{f(\sigma)}(\sigma I{-}A)^{-1}d\sigma,
\end{equation}
and finally the transforms of $f$ by the double layer potential kernel
\begin{equation}\label{eq3}
s(z)=s(f,z):=\int_{\partial \Omega}f(\sigma(s))\mu (\sigma(s),z)\,ds \quad S=S(f,A):=\int_{\partial \Omega}f(\sigma (s))\mu (\sigma (s),A)\,ds.
\end{equation}
Here $s$ denotes the arc length of $\sigma =\sigma (s)$ on the (counter clockwise oriented) boundary
and $\mu $ the kernel\footnote{Note that $\mu $ is twice the usual kernel associated to the double layer potential.} given, for $ \sigma (s)\neq z$ , $z\in \clos{\Omega }$ and $\sigma (s)$ not in the spectrum of $A$, by
\begin{align}\label{eq4}
\mu (\sigma(s),z):=\frac1\pi \frac{d\arg(\sigma (s){-}z)}{ds}=\frac{1}{2\pi i}\Big(\frac{\sigma '(s)}{\sigma(s) -z}-\frac{\clos{\sigma '(s)}}{\clos{\sigma(s)} -\bar z}\Big),\\
\label{eq5}\mu (\sigma(s) ,A):=
\frac{1}{2\pi i}\big(\sigma '(s)(\sigma(s) I{-}A)^{-1}-\clos{\sigma '(s)}(\clos{\sigma(s)} I {-}A^*)^{-1}\big).
\end{align}
(Note that $\sigma $ is a Lipschitz function of $s$ with a constant $1$, thus $\sigma '(s)$ exists for almost every $s$.) 
From these definitions, it is clear that (for $z\in\Omega$)
\begin{equation}\label{eq6}
f(z)+\clos{g(z)}=s(z)\quad\text{and}\quad S^*=f(A)^*+g(A).
\end{equation}
Note also that, if we choose the constant function $f=1$, then $g=1$, $f(A)=g(A)=I$,
\[
\int_{\partial \Omega}\mu (\sigma ,z)\,ds=s(1,z)=2,\quad\text{if  }z\in\Omega\quad\text{and}\quad
\int_{\partial \Omega}\mu (\sigma ,A)\,ds=S(1,A)=2I.
\]

We will use the following lemma:
\begin{lemma}
\label{lem}
 Assume that $f$ is a rational function satisfying $|f|\leq 1$ in $\Omega$. Then  $g $ defined in \eqref{eq2} admits a continuous extension to ${\cal A}(\clos\Omega)$. Furthermore, if we set
\[
c_1:=\sup\{\max_{z\in \clos{\Omega}}| g(z) |\,: f\textrm{ rational function, } |f|\leq 1\textrm{ in }\Omega\},
\]
this constant satisfies
\[
c_1\leq \max_{\sigma _0\in \partial \Omega}\int_{\partial \Omega}|\mu (\sigma (s),\sigma _0)|\,ds.
\]
\end{lemma}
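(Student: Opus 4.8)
The plan is to push everything to the boundary and finish with the maximum modulus principle, so that the only real work is to produce continuous boundary values for $g$ and to bound them by $\int_{\partial\Omega}|\mu|\,ds$. First I would note that a rational $f$ with $|f|\le1$ in $\Omega$ can have no pole in $\overline{\Omega}$: a pole at a boundary point would force $|f|$ to blow up along points of $\Omega$ approaching it. Hence $f$ is analytic, and in particular Lipschitz, on a neighborhood of $\overline{\Omega}$; this regularity of the density is exactly what tames the singularity of $\mu$. Since $g$ is a Cauchy-type integral it is analytic in $\Omega$, so once $g$ is known to extend continuously to $\overline{\Omega}$ (whence $g\in\mathcal A(\overline{\Omega})$ by rational approximation), the maximum modulus principle gives $c_1=\sup_f\max_{z\in\overline{\Omega}}|g(z)|=\sup_f\max_{\sigma_0\in\partial\Omega}|g(\sigma_0)|$.

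The heart of the matter is the boundary behaviour of the double layer potential. Using $\int_{\partial\Omega}\mu(\sigma,z)\,ds=2$ for $z\in\Omega$, I would rewrite \eqref{eq6} as
\[
\overline{g(z)}=s(f,z)-f(z)=\int_{\partial\Omega}\big(f(\sigma(s))-f(z)\big)\,\mu(\sigma(s),z)\,ds+f(z),\qquad z\in\Omega .
\]
The purpose of subtracting $f(z)$ inside the integral is that, from \eqref{eq4} and $|\sigma'|=1$, one has $|\mu(\sigma(s),z)|\le \frac{1}{\pi|\sigma(s)-z|}$, while the Lipschitz bound gives $|f(\sigma)-f(z)|\le L|\sigma-z|$; thus the integrand is dominated by the constant $L/\pi$, uniformly in $z\in\overline{\Omega}$, and $\partial\Omega$ has finite length. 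Dominated convergence then shows that this integral is continuous in $z$ on all of $\overline{\Omega}$, so $\overline{g}$ (hence $g$) extends continuously, and it identifies the boundary value. Using the winding value $\int_{\partial\Omega}\mu(\sigma,\sigma_0)\,ds=1$ at the (almost every) boundary point where a tangent exists — the boundary analogue of the interior value $2$ — the limit collapses to the Plemelj-type relation
\[
\overline{g(\sigma_0)}=\int_{\partial\Omega}f(\sigma(s))\,\mu(\sigma(s),\sigma_0)\,ds .
\]

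With the boundary value in hand the estimate is immediate. For such $\sigma_0$, using $|f|\le1$,
\[
|g(\sigma_0)|\le\int_{\partial\Omega}|f(\sigma(s))|\,|\mu(\sigma(s),\sigma_0)|\,ds\le\int_{\partial\Omega}|\mu(\sigma(s),\sigma_0)|\,ds ,
\]
where, if the right-hand integral is infinite there is nothing to prove, and if it is finite the kernel is absolutely integrable so the preceding integral is an ordinary one. Since $g$ is continuous on $\overline{\Omega}$ and tangent points are dense in $\partial\Omega$, the maximum of $|g|$ over $\partial\Omega$ is bounded by $\max_{\sigma_0}\int_{\partial\Omega}|\mu(\sigma,\sigma_0)|\,ds$; taking the supremum over admissible $f$ yields $c_1\le\max_{\sigma_0\in\partial\Omega}\int_{\partial\Omega}|\mu(\sigma(s),\sigma_0)|\,ds$, as claimed.

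The step I expect to be the main obstacle is precisely this passage to the boundary: making rigorous that the interior limit of $s(f,z)$ is the principal value plus the jump $f(\sigma_0)$, uniformly enough to give genuine continuity of $g$ on all of $\overline{\Omega}$ rather than merely nontangential limits almost everywhere. The splitting $f(\sigma)-f(z)$ together with the elementary bound $|\mu|\le 1/(\pi|\sigma-z|)$ is designed to reduce this classical double-layer-potential statement to dominated convergence against a constant majorant; the remaining care lies only in verifying the solid-angle values $2$ and $1$ and in absorbing the finitely many non-smooth boundary points through the continuity of $g$ and the maximum over $\sigma_0$.
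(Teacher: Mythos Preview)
Your argument is correct and follows essentially the same strategy as the paper: exploit the Lipschitz continuity of the rational $f$ on $\overline{\Omega}$ to cancel the $1/|\sigma-z|$ singularity of $\mu$, apply dominated convergence against the constant majorant $L/\pi$ to get a continuous extension, and then combine the boundary winding value $\int\mu(\sigma,\sigma_0)\,ds=1$ with the maximum principle. Your choice to subtract $f(z)$ (the running point) rather than $f(\sigma_0)$ (the fixed limit point) is a mild streamlining of the paper's proof, since it lets a single dominated-convergence argument handle simultaneously the continuity along the boundary and the matching of interior limits, whereas the paper treats these as separate steps (the latter via a nearest-boundary-point device).
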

For the next theorem, we will assume that the set of uniform limits of rational functions bounded in $\Omega$ is the algebra
\[
\mathcal{A}(\clos{\Omega}):=\{ \, f\, : \,  f  \mbox{ is holomorphic in } \Omega \mbox{ and continuous in }\clos\Omega \, \}.
\]
This is automatically satisfied when $\mathbb{C}\backslash \Omega$ is connected,
since from Mergelyan's theorem the set of polynomial functions is then dense in $\mathcal{A}(\clos{\Omega})$. In the non simply connected case, this requires an assumption on the analytic capacity of the inner boundary curves; note that this condition is satisfied for smooth inner boundary curves \cite{vitu}.

\begin{theorem}\label{th}
Assume that  Sp$(A)\subset\Omega$ and that, for all rational functions $f$ satisfying $|f|\leq 1$ in $\Omega$, there exists $\gamma (f)\in \C$ such that $\|S(f,A){+}\gamma(f) I\|\leq 2\,c_2 $, with $c_2$ independent of $f$.
Then $\clos\Omega$ is a $K$-spectral set for the operator $A$ with a constant 
 \[
 K=c_2+\sqrt{c_2^2{+}c_1 {+}\hat\gamma  },\quad \text{with}\quad \hat\gamma  :=\max\{|\gamma(f)|\,: |f |\leq 1 \text{ in }\Omega\}.
 \]
\end{theorem}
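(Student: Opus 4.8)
The plan is to reduce the statement to the single inequality $\|f(A)\|\le K$ for an arbitrary rational $f$ with $\sup_{\clos\Omega}|f|\le 1$, and then to pass to $\mathcal A(\clos\Omega)$ by the continuity already noted in the introduction. So I fix such an $f$ and let $g,s,S$ be the transforms from \eqref{eq2}--\eqref{eq5}. The engine is the operator identity \eqref{eq6}, which, taking adjoints, I rewrite as $f(A)=S-g(A)^*$. The idea is to square it: for a unit vector $u$ I would compute
\[
\|f(A)u\|^2=\langle f(A)u,Su\rangle-\langle f(A)u,g(A)^*u\rangle=\operatorname{Re}\langle f(A)u,Su\rangle-\operatorname{Re}\langle (gf)(A)u,u\rangle,
\]
using $g(A)f(A)=(gf)(A)$ and the fact that the left-hand side is real. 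This reduces everything to estimating the two terms on the right.

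For the first term I would insert the free constant, $S=(S+\gamma(f)I)-\gamma(f)I$, so that $\operatorname{Re}\langle f(A)u,Su\rangle\le \|S+\gamma(f)I\|\,\|f(A)u\|+|\gamma(f)|\,|\langle f(A)u,u\rangle|$. By hypothesis the first factor is at most $2c_2$, which produces the linear contribution $2c_2\|f(A)u\|$ that becomes the $2c_2\|f(A)\|$ term of the quadratic. The remaining $\gamma$-dependent piece must be folded into the constant term together with the cross term. Here I would invoke the Lemma: $g$, and hence $gf$, extends continuously to $\mathcal A(\clos\Omega)$ with $\sup_{\clos\Omega}|g|\le c_1$ and $\sup_{\clos\Omega}|gf|\le c_1$, while $|\gamma(f)|\le\hat\gamma$. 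The goal is to combine these into
\[
-\operatorname{Re}\langle (gf)(A)u,u\rangle-\operatorname{Re}\big(\overline{\gamma(f)}\,\langle f(A)u,u\rangle\big)\le c_1+\hat\gamma,
\]
that is, to show that the relevant quadratic form is governed by the sup-norms $c_1$ and $\hat\gamma$ and not by the (a priori merely finite) global spectral constant of $A$.

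Granting that estimate, the identity collapses to $\|f(A)u\|^2\le 2c_2\|f(A)u\|+(c_1+\hat\gamma)$ for every unit $u$; taking the supremum over $u$ and solving $t^2-2c_2 t-(c_1+\hat\gamma)\le 0$ gives $\|f(A)\|\le c_2+\sqrt{c_2^2+c_1+\hat\gamma}=K$. This proves the claim for rational $f$, and the general case follows by density of bounded rational functions in $\mathcal A(\clos\Omega)$ together with the continuity of the functional calculus.

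The main obstacle is precisely the second estimate: bounding the cross term $\operatorname{Re}\langle (gf)(A)u,u\rangle$, with its $\gamma$-correction, by the constant $c_1+\hat\gamma$. The naive bound $|\langle (gf)(A)u,u\rangle|\le\|(gf)(A)\|$ only reinstates a factor of the unknown spectral constant and leads merely to the weaker linear estimate $K\le 2c_2+c_1+\hat\gamma$; avoiding this circularity is exactly the point of the $(1+\sqrt2)$-type improvement. I expect the resolution to exploit the structure of $S$ and the double-layer kernel $\mu$: rather than estimating through the operator norm of $(gf)(A)$, one re-expresses $g(A)^*u=Su-f(A)u$ and controls the cross term via the numerical-range behaviour of the double-layer potential operator (together with the continuous extension supplied by the Lemma). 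Making this rigorous, and verifying that the constants produced are exactly $c_1$ and $\hat\gamma$, is where the real work lies.
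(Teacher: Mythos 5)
Your setup is sound and you have correctly located the crux, but the proof is incomplete: the inequality you need,
\[
-\Re\langle (gf)(A)u,u\rangle-\Re\big(\overline{\gamma(f)}\,\langle f(A)u,u\rangle\big)\leq c_1+\hat\gamma
\quad\text{for all unit vectors } u,
\]
is never established, and your suggested route (controlling this cross term through the ``numerical-range behaviour'' of the double-layer potential) is not how the paper does it, nor is it clear it can work. A bound of the numerical radius of $(gf)(A)$ by $\sup_\Omega|gf|$ is a numerical-radius analogue of the very spectral-set inequality being proved; nothing in Lemma~\ref{lem} or in the hypothesis on $S(f,A)$ gives it. As it stands, your argument yields only the linear estimate $K\leq 2c_2+c_1+\hat\gamma$, which you yourself identify as insufficient.

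The paper circumvents the obstacle by an algebraic device (Schwenninger's trick, from \cite{rasc}) rather than by proving any pointwise cross-term bound. First, $K=\sup\{\|f(A)\|\,:\,|f|\leq 1 \text{ in }\Omega\}$ is finite a priori by the Cauchy-integral estimate. Then, instead of squaring $\|f(A)u\|$, one takes the fourth power: substituting $f(A)^*=(S+\gamma I)^*-(g(A)+\bar\gamma I)$ into $(f(A)^*f(A))^2$ gives
\[
(f(A)^*f(A))^2= f(A)^*f(A)\,(S{+}\gamma I)^*f(A)-f(A)^*\,(f(g{+}\bar\gamma)f)(A).
\]
The first term is bounded by $2c_2K^3$, and the second by $K\cdot K(c_1+\hat\gamma)$, because $f(g+\bar\gamma)f$ is a \emph{single} function in $\mathcal A(\clos\Omega)$ with sup-norm at most $c_1+\hat\gamma$, so the defining property of $K$ applies to it. The $C^*$-identity $\|(f(A)^*f(A))^2\|=\|f(A)\|^4$ and the supremum over $f$ then give $K^4\leq 2c_2K^3+(c_1+\hat\gamma)K^2$, i.e.\ $K^2\leq 2c_2K+(c_1+\hat\gamma)$, hence $K\leq c_2+\sqrt{c_2^2+c_1+\hat\gamma}$. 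The point you are missing is that the troublesome term is \emph{allowed} to carry a factor of $K$: measured against $K^4$ rather than $K^2$, that factor washes out after dividing by $K^2$. This bootstrapping — which crucially uses the a priori finiteness of $K$ and takes the supremum before solving the polynomial inequality — is the idea your proposal lacks.
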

One way to verify the hypotheses of Theorem \ref{th} is to introduce
\[
\lambda _{min}(\mu(\sigma ,A))=\inf\{\lambda \,: \lambda \in {\rm Sp}(\mu (\sigma ,A))\},
\] and use
\[\gamma =\gamma (f):=-\int_{\partial \Omega}f(\sigma (s))\lambda _{min}(\mu(\sigma(s) ,A))\,ds.
\]
Then we use the following result from \cite{cgl}
\begin{lemma}\label{cagrli}
 Assume that $f$ is a rational function satisfying $|f|\leq 1$ in $\Omega$ and that Sp$(A)\subset\Omega$, then it holds
 \[
  \|S(f,A)+\gamma(f) I\|\leq 2+\delta ,\quad\text{with}\quad \delta =-\int_{\partial \Omega}
  \lambda _{min}(\mu(\sigma(s) ,A))\,ds.
  \]
\end{lemma}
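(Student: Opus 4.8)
The plan is to exploit a positivity structure hidden in the operator $\mu(\sigma(s),A)$. First I would verify that $\mu(\sigma(s),A)$ is self-adjoint for almost every $s$: taking the adjoint in \eqref{eq5} and using $((\sigma(s) I{-}A)^{-1})^*=(\clos{\sigma(s)} I{-}A^*)^{-1}$ together with $\overline{1/(2\pi i)}=-1/(2\pi i)$ merely exchanges the two terms and reproduces $\mu(\sigma(s),A)$. Since $\mathrm{Sp}(A)\subset\Omega$ is disjoint from the compact curve $\partial\Omega$, both resolvents are bounded uniformly in $s$, so $\mu(\sigma(s),A)$ is a bounded self-adjoint operator and $\lambda_{min}(\mu(\sigma(s),A))$ is a genuine finite real number, the bottom of its spectrum.

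The key object is then
\[
\nu(s):=\mu(\sigma(s),A)-\lambda_{min}(\mu(\sigma(s),A))\,I,
\]
which is positive semidefinite by construction, since subtracting the infimum of the spectrum of a self-adjoint operator leaves a nonnegative operator. With this notation, and recalling $\gamma(f)=-\int_{\partial\Omega}f(\sigma(s))\lambda_{min}(\mu(\sigma(s),A))\,ds$, the operator to be bounded takes the clean form
\[
S(f,A)+\gamma(f)\,I=\int_{\partial\Omega}f(\sigma(s))\,\nu(s)\,ds,
\]
an integral of positive operators weighted by a function of modulus at most one on $\partial\Omega$ (here $|f|\le1$ on $\partial\Omega$ holds by continuity, as $f$ is rational with no pole in $\clos\Omega$).

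To estimate the norm I would fix unit vectors $x,y\in H$ and bound
\[
|\langle(S(f,A)+\gamma(f)I)x,y\rangle|\leq\int_{\partial\Omega}|f(\sigma(s))|\,|\langle\nu(s)x,y\rangle|\,ds\leq\int_{\partial\Omega}|\langle\nu(s)x,y\rangle|\,ds.
\]
Because each $\nu(s)\geq0$, the sesquilinear form $\langle\nu(s)\cdot,\cdot\rangle$ obeys the Cauchy--Schwarz inequality $|\langle\nu(s)x,y\rangle|\leq\langle\nu(s)x,x\rangle^{1/2}\langle\nu(s)y,y\rangle^{1/2}$; applying next the Cauchy--Schwarz inequality in the $s$-integral gives
\[
|\langle(S(f,A)+\gamma(f)I)x,y\rangle|\leq\Big(\int_{\partial\Omega}\langle\nu(s)x,x\rangle\,ds\Big)^{1/2}\Big(\int_{\partial\Omega}\langle\nu(s)y,y\rangle\,ds\Big)^{1/2}.
\]

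Finally I would evaluate the two integrals using $\int_{\partial\Omega}\mu(\sigma(s),A)\,ds=2I$, established in the excerpt, together with $\delta=-\int_{\partial\Omega}\lambda_{min}(\mu(\sigma(s),A))\,ds$; these give $\int_{\partial\Omega}\nu(s)\,ds=(2+\delta)I$, whence $\int_{\partial\Omega}\langle\nu(s)x,x\rangle\,ds=(2+\delta)\|x\|^2=2+\delta$ and likewise for $y$. Combining, $|\langle(S(f,A)+\gamma(f)I)x,y\rangle|\leq2+\delta$ for all unit $x,y$, which is exactly $\|S(f,A)+\gamma(f)I\|\leq2+\delta$. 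I expect the only real subtlety to be the preliminary self-adjointness and positivity observation: once $\nu(s)\geq0$ is recognized, the two Cauchy--Schwarz steps are routine, and the remaining care is merely checking measurability and integrability of $s\mapsto\nu(s)$, which follow from the a.e.\ existence of $\sigma'(s)$ and the uniform boundedness of the resolvents on $\partial\Omega$.
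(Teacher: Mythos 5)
Your proof is correct. Note that the paper does not actually prove this lemma --- it imports it from \cite{cgl} --- so there is no in-paper argument to compare against; but your route (subtracting $\lambda_{min}(\mu(\sigma(s),A))\,I$ to form the positive family $\nu(s)$, then combining the Cauchy--Schwarz inequality for the positive form $\langle \nu(s)\,\cdot,\cdot\rangle$ with Cauchy--Schwarz in the $s$-integral, and finally evaluating $\int_{\partial \Omega}\nu(s)\,ds=(2{+}\delta)I$ via $\int_{\partial \Omega}\mu(\sigma(s),A)\,ds=2I$) is precisely the standard argument behind the cited result, and it is the same positivity mechanism the paper invokes inline in sections 5 and 6 when it bounds $\|S_1\|$ and $\|S_2\|$ by norms of integrals of positive operator families.
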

\noindent
Thus, the hypotheses of Theorem \ref{th} are satisfied with $c_2 = 1 + \delta / 2$ and 
$\hat{\gamma} = \int_{\partial \Omega} | \lambda_{min} ( \mu ( \sigma (s), A )) |\,ds$.

\medskip
 
The paper is organized as follows.  In section 2 we provide a proof of
Lemma 1; the less obvious part is the continuity which can be found in the literature 
but under stronger smoothness assumptions on the boundary and in a more general context, see for instance \cite{fo} or Carl Neumann \cite{neu} for the original proof.
Section 3 contains a proof of Theorem 2 using
a technique of Schwenninger \cite{rasc} that was also incorporated in \cite{cgl}
to improve upon the original result there.  Section 4 gives some estimates of 
$\lambda_{min}( \mu ( \sigma , A ) )$, and these are used in sections 5 and 6
to show that various annular-like regions and regions with circular cutouts are $K$-spectral sets and to bound the value of $K$.  Finally, in section 7, we show how these new theoretical 
results provide better bounds on the convergence rate of the GMRES algorithm for
solving linear systems and on that of the rational Arnoldi algorithm
for approximating $f(A)b$, where $A $ is a square matrix, $b$ is a given vector, and $f$  is a 
uniform limit of bounded rational functions on a region discussed in section 5 or 6. 
Note that some similar arguments have been used in \cite{BeckCrouz} for bounding Faber polynomials of an operator.

\section{Proof of Lemma \ref{lem}}
Recall that $f$ is a rational function bounded by 1 in $\Omega$ and that
\[
g(z):=\frac1{2\pi i}\int_{\partial \Omega}\clos{f(\sigma)}\frac{d\sigma }{\sigma -z},\qquad \text{for }z\in \Omega.
\]
Since $f$ is continuous on the boundary $\partial \Omega$, it is clear that $g$ is holomorphic in $\Omega$. It remains to show that $g$ has a continuous extension to $\clos\Omega$ and that it is bounded by $c_1$. For that, we will first remark that there exists a finite constant $\gamma _f$ such that 
\[
\gamma _f=\sup\{\frac{|f(z _1){-}f(z_2)|}{|z _1-z_2|}\,: z_1\neq z_2\in\clos\Omega\}.
\]
Indeed, there exist two sequences, $\{z_{1,n}\}$ $\{z_{2,n}\}$ in $\clos\Omega$ with $\gamma_f=\lim|f[z_{1,n},z_{2,n}]\,|$; after extraction of a subsequence if needed, we can assume that $z_{1,n}\to z_1\in\clos\Omega$ and $z_{2,n}\to z_2\in\clos\Omega$. This implies $\gamma _f=|f[z_1,z_2]\,|$ which is finite.
Now, we extend $g$ on the boundary by setting
\begin{equation}\label{eq7}
g(\sigma _0)=\int_{\partial \Omega}\big(\clos{f(\sigma (s))}{-}\clos{f(\sigma_0)}\big)\,\mu (\sigma (s),\sigma _0)\,ds+
\clos{f(\sigma _0)},\qquad \text{for }\sigma _0\in \partial \Omega.
\end{equation}
\begin{proof}[a) Proof of: $g$ is continuous in restriction to $\partial \Omega$] \phantom{toto}\ \\
Clearly, it suffices to show the continuity with respect to $\sigma _0$ of the integral part in \eqref{eq7}.
Note that $|f(\sigma (s)){-}f(\sigma_0)|\leq \gamma _f|\sigma (s){-}\sigma_0|$ and $|\mu (\sigma (s),\sigma_0)|\leq |\pi (\sigma (s){-}\sigma_0)|^{-1}$; the integrand being continuous for $s\neq t$ and bounded, the continuity of $g$ in restriction to $\partial \Omega$ follows from the dominated convergence theorem.
\end{proof}
\begin{proof}[b) Proof of: $g$ is continuous in  $\clos \Omega$]
It suffices to show that, if $z_n\to \sigma _0\in \partial \Omega$ with $z_n\in\Omega$, then $g(z_n)\to g(\sigma _0)$. For that we associate to each $z_n$ a point $\sigma _n\in \partial \Omega$ such that
\[
|z_n{-}\sigma _n|=\min\{|z_n{-}\sigma |\,: \sigma \in \partial \Omega\}.
\]
Clearly, it holds $\sigma_n\to \sigma _0$ whence, from the part a), it suffices to show that $g(z_n){-}g(\sigma _n)\to0$. Using that $\int_{\partial \Omega}\mu (\sigma(s) ,z)\,ds=2$ if $z\in \Omega$, we can write
\begin{align*}
g(z_n)-g(\sigma_n)&=\clos{s(z_n)}-\clos{f(z_n)}-g(\sigma_n)\\
&=\int_{\partial \Omega}\clos{\Delta f(\sigma (s),\sigma _n)}\ \Delta \mu (\sigma (s),z_n,\sigma _n )\,ds
+\clos{f(\sigma _n)}{-}\clos{f(z_n)},
\end{align*}
with
\[
\Delta f(\sigma(s),\sigma _n )=f(\sigma(s) ){-}f(\sigma _n),\quad\Delta \mu (\sigma(s) ,z_n,\sigma _n)=\mu (\sigma(s),z_n){-}\mu (\sigma(s) ,\sigma _n).
\]
Note that 
\[
|\Delta f(\sigma(s),\sigma _n )\Delta \mu (\sigma (s),z_n,\sigma _n )|\leq \frac{ \gamma _f\,|\sigma(s){-}\sigma _n| }{\pi }\Big|\Im \frac{\sigma '(s)(z_n-\sigma _n)}{(\sigma (s){-}z_n)(\sigma (s){-}\sigma _n) }\Big|\leq \frac{\gamma _f}{\pi}.
\]
Furthermore $\Delta f(\sigma(s),\sigma _n )\Delta \mu (\sigma (s),z_n,\sigma _n )\to 0$ for all $s$ such that $\sigma (s)\neq \sigma _0$. We deduce that $g(z_n){-}g(\sigma _n)$ tends to $0$ from the dominated convergence theorem.
\end{proof}
\begin{proof}[c) Proof of the bound]
We now remark that $\int_{\partial \Omega}|\mu (\sigma (s),\sigma _0)|\,ds=1$ for all points $\sigma _0\in \partial \Omega$ where $\sigma$ is differentiable; therefore we deduce from Equation\,\eqref{eq7}
\[
g(\sigma _0)=\int_{\partial \Omega}\clos{f(\sigma (s))}\,\mu (\sigma (s),\sigma _0)\,ds,\quad \textrm{ for almost every }\sigma _0\in \partial \Omega.
\]
This implies the bound $|g|\leq \max_{\sigma _0\in \partial \Omega}\int_{\partial \Omega}|\mu (\sigma (s),\sigma _0)|\,ds$ on the boundary and then in the interior by the maximum principle.
\end{proof}

\section{Proof of Theorem \ref{th}}
\begin{proof}[]
Let $A$ be a bounded operator satisfying Sp$(A)\subset \Omega$.
We set 
\[
K=K(A)=\sup\{\|f(A)\|\,: f\in\mathcal A(\Omega),\ Ê|f|\leq 1 \text {  in }\Omega\}.
 \]
 We have a first estimate $K\leq \frac{1}{2\pi }\int_{\partial \Omega}\|(\sigma I{-}A)^{-1}\|\,|d\sigma |$. 
We have seen in \eqref{eq6} that $S^*=f(A)^*+g(A)$, thus
 \[
( f(A)^*f(A))^2= f(A)^*f(A)\,(S{+}\gamma I)^*f(A)-f(A)^*(f(g+\bar\gamma )f)(A).
  \]
If we assume $|f|\leq 1$ in $\Omega$, then we can use the bounds $\|f(A)\|\leq K$ and, using that $g$ is a uniform limit of rational functions, $\|(f(g+\bar\gamma )f)(A)\|\leq K \sup_{\Omega}|f(g{+}\bar\gamma )f|\leq c_1{+}\hat\gamma $ to get
 \[
\|( f(A)^*f(A))^2\|\leq 2\,K^3c_2+K^2(c_1 {+}\hat\gamma) .
  \]
 Whence, for the supremum, $K^4\leq 2\,c_2K^3{+}(c_1{+}\hat \gamma  )K^2$, which shows that $K\leq c_2{+}\sqrt{c_2^2{+}c_1 {+}\hat\gamma }$.
\end{proof}

\section{Some estimates of $\lambda _{min}(\mu (\sigma ,A))$}
In this section, we fix a point $\sigma _0=\sigma (s_0)\in\partial \Omega$ with unit tangent
$\sigma '_0=\sigma '(s_0)$; the half-plane $\Pi _0=\{z\in\C\,:\Im\big(\sigma '_0(\clos{\sigma_0}{-}\bar z)\big)\geq 0\}$ has the same outward normal as $\Omega$ at $\sigma_0$.
Note that
\[
\mu (\sigma _0,A)=\frac1{2\pi i}\big(\sigma '_0(\sigma _0I{-}A)^{-1}-\clos{\sigma '_0}(\clos{\sigma} _0I{-}A^{*})^{-1}
\big)
\]
depends on $\sigma _0$ and $\sigma'_0$, but not on the other values of $\sigma (\cdot)$. 
\begin{lemma}
 Assume $W(A)\subset\Pi _0$; then  $\lambda _{min}(\mu (\sigma_0,A))\geq 0$. If furthermore $\sigma_0\in \partial W(A)$, then $\lambda _{min}(\mu (\sigma_0,A))=0$.
\end{lemma}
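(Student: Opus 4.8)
The plan is to use that $\mu(\sigma_0,A)$ is self-adjoint and to rewrite its quadratic form directly in terms of the numerical range. First I would record self-adjointness: since
$\mu(\sigma_0,A)=\frac1{2\pi i}\big(\sigma'_0(\sigma_0 I{-}A)^{-1}-\overline{\sigma'_0}(\overline{\sigma_0} I{-}A^*)^{-1}\big)$
and $\big((\sigma_0 I{-}A)^{-1}\big)^*=(\overline{\sigma_0} I{-}A^*)^{-1}$, a direct check gives $\mu(\sigma_0,A)^*=\mu(\sigma_0,A)$. Hence $\lambda_{min}(\mu(\sigma_0,A))=\inf_{\|x\|=1}\langle\mu(\sigma_0,A)x,x\rangle$, and the lemma reduces to analysing this form. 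Note also that $\sigma_0\in\partial\Omega$ while ${\rm Sp}(A)\subset\Omega$, so $\sigma_0\notin{\rm Sp}(A)$ and $R:=(\sigma_0 I{-}A)^{-1}$ is bounded and boundedly invertible.

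For the first assertion I would substitute $y=Rx$, so that $x=(\sigma_0 I{-}A)y$. A short computation using $\langle R^*x,x\rangle=\overline{\langle Rx,x\rangle}$ gives $\langle\mu(\sigma_0,A)x,x\rangle=\frac1\pi\Im\big(\sigma'_0\langle y,(\sigma_0 I{-}A)y\rangle\big)$. Writing $w_y=\langle Ay,y\rangle/\|y\|^2\in W(A)$ for $y\neq0$, one has $\langle y,(\sigma_0 I{-}A)y\rangle=\overline{(\sigma_0{-}w_y)}\,\|y\|^2$, whence
\[
\langle\mu(\sigma_0,A)x,x\rangle=\frac1\pi\,\Im\big(\sigma'_0\,\overline{(\sigma_0{-}w_y)}\big)\,\|y\|^2 .
\]
The key observation is that $\Im\big(\sigma'_0\overline{(\sigma_0{-}w)}\big)$ is exactly the defining quantity of the half-plane: it is $\geq0$ precisely when $w\in\Pi_0$ (geometrically it is the signed distance from $w$ to the boundary line $\partial\Pi_0$, which passes through $\sigma_0$ since $|\sigma'_0|=1$). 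As $w_y\in W(A)\subset\Pi_0$, every term is non-negative, so $\langle\mu(\sigma_0,A)x,x\rangle\geq0$ for all $x$, i.e.\ $\lambda_{min}(\mu(\sigma_0,A))\geq0$.

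For the equality assertion, assume in addition $\sigma_0\in\partial W(A)$. Since $\partial W(A)\subset\overline{W(A)}$, there are unit vectors $u_n$ with $w_n:=\langle Au_n,u_n\rangle\to\sigma_0$. Applying the identity above with $x_n=(\sigma_0 I{-}A)u_n$ (so $y=u_n$, $\|y\|=1$) gives
\[
\frac{\langle\mu(\sigma_0,A)x_n,x_n\rangle}{\|x_n\|^2}=\frac1\pi\,\frac{\Im\big(\sigma'_0\overline{(\sigma_0{-}w_n)}\big)}{\|(\sigma_0 I{-}A)u_n\|^2}.
\]
The numerator tends to $0$, because $w_n\to\sigma_0\in\partial\Pi_0$ and $w\mapsto\Im(\sigma'_0\overline{(\sigma_0{-}w)})$ is continuous and vanishes at $w=\sigma_0$. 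It remains to bound the denominator from below, and this is the only delicate point: if $\|(\sigma_0 I{-}A)u_n\|\to0$ along a subsequence, the $u_n$ would be approximate eigenvectors, forcing $\sigma_0\in{\rm Sp}(A)$, which contradicts $\sigma_0\notin{\rm Sp}(A)$. Hence $\liminf_n\|(\sigma_0 I{-}A)u_n\|>0$, the ratio above tends to $0$, and therefore $\lambda_{min}(\mu(\sigma_0,A))\leq0$. Together with the first part this yields $\lambda_{min}(\mu(\sigma_0,A))=0$.

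The main obstacle is precisely this denominator estimate in the equality case: one must exclude that the boundary sequence realizing $\sigma_0\in\partial W(A)$ is simultaneously an approximate eigensequence, which is exactly where the hypothesis ${\rm Sp}(A)\subset\Omega$ (hence $\sigma_0\notin{\rm Sp}(A)$) enters. Everything else is a routine identity for the quadratic form of $\mu(\sigma_0,A)$ combined with the geometric reading of the condition $W(A)\subset\Pi_0$.
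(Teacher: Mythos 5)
Your proof is correct, and for the first assertion it is exactly the paper's argument: the paper also substitutes $u=(\sigma_0 I{-}A)^{-1}v$ and reduces nonnegativity of the quadratic form to the inequality $\Im\big(\sigma'_0(\overline{\sigma_0}{-}\bar z)\big)\geq 0$ defining $\Pi_0$, applied to $z=\langle Au,u\rangle/\|u\|^2\in W(A)$. The difference lies in the equality assertion. The paper assumes the boundary point is attained: it writes $\sigma_0=\langle Au_0,u_0\rangle\in W(A)$ with $\|u_0\|=1$, chooses $v=(\sigma_0 I{-}A)u_0$, and obtains $\langle\mu(\sigma_0,A)v,v\rangle=0$ exactly. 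You instead take unit vectors $u_n$ with $\langle Au_n,u_n\rangle\to\sigma_0$ and show that the Rayleigh quotients tend to $0$, which requires the extra step of bounding $\|(\sigma_0 I{-}A)u_n\|$ away from zero (this follows even more directly than your contradiction argument: $1=\|u_n\|\leq \|(\sigma_0 I{-}A)^{-1}\|\,\|(\sigma_0 I{-}A)u_n\|$, so the denominator is at least $\|(\sigma_0 I{-}A)^{-1}\|^{-2}$). Your version is in fact slightly more careful than the paper's: in infinite dimensions $W(A)$ need not be closed, so a point of $\partial W(A)$ need not belong to $W(A)$, and the paper's choice of $u_0$ is then unavailable; your sequence argument covers precisely this case and proves the lemma as literally stated, at the cost of the few extra lines where, as you correctly identify, the invertibility of $\sigma_0 I{-}A$ (i.e.\ $\sigma_0\notin{\rm Sp}(A)$, implicit in the definition of $\mu(\sigma_0,A)$) is what makes the limit argument work.
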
\label{lem4}
\begin{proof}
 Let us consider $v\in H$, $v\neq 0$. We set $u=(\sigma_0I{-}A)^{-1}v$ and $\alpha=\|u\|$; then $z=\langle Au,u\rangle/\alpha^2\in W(A)$, whence
 \[
 \pi \langle \mu (\sigma_0,A)v,v\rangle=\Im\big(\sigma '_0\langle u,(\sigma_0I{-A)}u\rangle\big)=
 2\alpha^2\Im\big(\sigma '_0(\clos{\sigma_0}{-}\bar z)\big)\geq 0,
 \]
 which shows that $\lambda _{min}(\mu (\sigma_0) ,A))\geq 0$. If furthermore $\sigma_0=\langle Au_0,u_0\rangle\in W(A)$, $\|u_0\|=1$, then choosing $v=(\sigma_0I{-}A)u_0$, thus $z=\sigma_0$, we obtain $\langle \mu (\sigma_0,A)v,v\rangle=0$, whence  $\lambda _{min}(\mu (\sigma_0,A))= 0$.
 \end{proof}
 \noindent{\bf Remark.} {\it In particular, if $\Omega$ is a convex open set which contains $W(A)$, then 
 we deduce $\mu (\sigma ,A)\geq 0$ for all $\sigma \in\partial \Omega$; thus, $\|S(f,A)\|\leq \big\|\int_{\partial \Omega} \mu (\sigma ,A)\,ds\big\|=2$, for every $f$ with $|f| \leq 1$ in $\Omega$. Since $\Omega$ is convex, we deduce $\mu (\sigma ,\sigma_0)\geq 0$ and $c_1=\max_{\sigma _0\in \partial \Omega}\int_{\partial \Omega}\mu (\sigma (s),\sigma _0)\,ds=1$. Therefore, we deduce from Theorem \ref{th}, used with $\gamma (f)=0$ and $c_2=1$, that $\Omega$ is a $K$-spectral set for $A$, with $K\leq 1+\sqrt2$. In particular, using a decreasing sequence of convex $\Omega$ tending to $W(A)$, we refind the Palencia estimate: the numerical range is a $(1{+}\sqrt2)$-spectral set.} 
 
\begin{lemma}\label{lem5}
 Assume $|\sigma_0{-}\omega|=R$ and $\{z\in\C\,:|z{-}\omega|\leq R\}\subset \Pi _0$. If  $\|A{-}\omega I\|\leq R$, then  $\lambda _{min}(\mu (\sigma_0,A))\geq \frac{1}{2\pi R }$.
\end{lemma}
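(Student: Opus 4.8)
The plan is to follow the proof of Lemma~\ref{lem4}, now bounding the quadratic form $\langle\mu(\sigma_0,A)v,v\rangle$ from below by $\frac1{2\pi R}\|v\|^2$ for every $v\neq0$, which is exactly the assertion $\lambda_{min}(\mu(\sigma_0,A))\geq\frac1{2\pi R}$. First I would read off the geometry hidden in the hypotheses. The line $\partial\Pi_0$ passes through $\sigma_0$, the circle $\{|z{-}\omega|=R\}$ passes through $\sigma_0$ as well since $|\sigma_0{-}\omega|=R$, and the entire closed disk lies in $\Pi_0$; hence $\partial\Pi_0$ supports the disk at $\sigma_0$ and the center lies at distance $R$ from $\sigma_0$ along the inner unit normal $i\sigma'_0$. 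Thus $\omega-\sigma_0=iR\,\sigma'_0$, equivalently
\[
\sigma_0 I-A=(\sigma_0{-}\omega)I-(A{-}\omega I)=-iR\,\sigma'_0 I-B,\qquad B:=A-\omega I,
\]
and the norm hypothesis becomes simply $\|B\|\leq R$.

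Next, exactly as in Lemma~\ref{lem4}, for $v\neq0$ I would set $u=(\sigma_0 I{-}A)^{-1}v$ and $\alpha=\|u\|$, so that $\pi\,\langle\mu(\sigma_0,A)v,v\rangle=\Im\big(\sigma'_0\langle u,(\sigma_0 I{-}A)u\rangle\big)$. Substituting the expression above for $\sigma_0 I{-}A$ and using $|\sigma'_0|=1$ gives
\[
\pi\,\langle\mu(\sigma_0,A)v,v\rangle=R\,\alpha^2-\Im\big(\sigma'_0\langle u,Bu\rangle\big).
\]
In parallel I would expand the squared norm of $v=(\sigma_0 I{-}A)u=-(iR\,\sigma'_0 u+Bu)$, namely
\[
\|v\|^2=\|iR\,\sigma'_0 u+Bu\|^2=R^2\alpha^2-2R\,\Im\big(\sigma'_0\langle u,Bu\rangle\big)+\|Bu\|^2.
\]

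The decisive step is to eliminate the cross term $\Im(\sigma'_0\langle u,Bu\rangle)$ between these two identities; doing so collapses everything to the clean relation
\[
2\pi R\,\langle\mu(\sigma_0,A)v,v\rangle=\|v\|^2+\big(R^2\alpha^2-\|Bu\|^2\big).
\]
Since $\|Bu\|=\|(A{-}\omega I)u\|\leq R\|u\|=R\alpha$, the parenthesized term is nonnegative, whence $2\pi R\,\langle\mu(\sigma_0,A)v,v\rangle\geq\|v\|^2$ for all $v$, giving the claim. I expect the only points requiring care to be fixing the orientation of the normal, so that $\omega-\sigma_0=iR\,\sigma'_0$ and not $-iR\,\sigma'_0$, and carrying out the two expansions so that the remainder is precisely the manifestly nonnegative quantity $R^2\|u\|^2-\|(A{-}\omega I)u\|^2$; once this last identity is established the bound is immediate and sharp.
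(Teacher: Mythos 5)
Your proof is correct and is essentially the paper's argument in quadratic-form disguise: your final identity $2\pi R\,\langle\mu(\sigma_0,A)v,v\rangle=\|v\|^2+R^2\|u\|^2-\|Bu\|^2$ is exactly the paper's operator factorization $2\pi\mu(\sigma_0,A)-\tfrac1R I=\tfrac1R(\sigma_0 I{-}A)^{-1}(R^2I{-}AA^*)(\bar\sigma_0 I{-}A^*)^{-1}\geq 0$ read off against vectors (with $A^*A$ in place of $AA^*$, which is immaterial since $\|A\|=\|A^*\|$). Your normalization $\omega-\sigma_0=iR\,\sigma'_0$ likewise matches the paper's choice $\omega=0$, $\sigma_0=Re^{i\theta}$, $\sigma'_0=ie^{i\theta}$.
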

\begin{proof} Without loss of generality, we may assume $\omega=0$, $\sigma_0=R\,e^{i\theta }$,
$\sigma '_0=i\,e^{i\theta }$.  Then
\begin{align*}
2\pi  \mu (\sigma_0 ,A)-\frac{1}{R}I&=e^{i\theta }(\sigma_0 I{-}A)^{-1}+e^{-i\theta }(\bar\sigma_0 I{-}A^*)^{-1}-\frac{1}{R}I\\
&=\frac1{R}(\sigma_0 I{-}A)^{-1}(R^2I{-}AA^*)(\bar\sigma_0 I{-}A^*)^{-1}\geq 0,
\end{align*}
 since $\|A\|\leq R$.
\end{proof}
\noindent{\bf Remark.} {\it In particular, if $\Omega$ is the disk $\{z\in\C\,:|z{-}\omega|< R\}$,
 with the notations of Lemma\,\ref{cagrli}
we obtain $\delta \leq -1$. It is shown in \cite{cgl}[section 6.1] that in this case $K\leq \max(1,\|S{+}\gamma I )\|)$ whence, from this lemma, we get $K=1$. This is just the famous von Neumann inequality:
$\Omega$ is a spectral set for $A$.}
\medskip

\begin{lemma}\label{lem6}
Assume $\sigma_0{-}\omega=i\sigma '_0/R$ with $R>0$. If $\|(A{-}\omega I)^{-1}\|\leq R$, then  $\lambda _{min}(\mu (\sigma_0,A))\geq -\frac{R}{2\pi}$. 
\end{lemma}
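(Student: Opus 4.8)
The plan is to follow the computation in the proof of Lemma~\ref{lem5} almost verbatim, the key difference being an overall sign change that reflects the fact that here $\sigma_0$ lies on a circle which curves \emph{away} from $\Omega$ (an inner boundary, i.e.\ a hole) rather than toward it; this is exactly what produces a negative rather than a positive lower bound for $\lambda_{min}$.

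First I would normalize by translating so that $\omega=0$, so the hypothesis becomes $\|A^{-1}\|\le R$ and the geometric relation becomes $\sigma_0=i\sigma'_0/R$. Choosing the phase convention $\sigma'_0=-i\,e^{i\theta}$, we get $\sigma_0=e^{i\theta}/R$ and $|\sigma_0|=1/R$, and substituting into \eqref{eq5} yields the explicit form
\[
2\pi\,\mu(\sigma_0,A)=-e^{i\theta}(\sigma_0 I{-}A)^{-1}-e^{-i\theta}(\bar\sigma_0 I{-}A^*)^{-1},
\]
which is precisely the negative of the expression appearing in Lemma~\ref{lem5}. Note that $(\sigma_0 I{-}A)^{-1}$ exists because $\mu(\sigma_0,A)$ is assumed defined, equivalently $\sigma_0\notin\mathrm{Sp}(A)$.

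The central step is an algebraic factorization. Since $e^{i\theta}=R\sigma_0$, I would set $P=(\sigma_0 I{-}A)^{-1}$ (so $P^*=(\bar\sigma_0 I{-}A^*)^{-1}$), rewrite $2\pi\,\mu(\sigma_0,A)+R\,I=R\big(I-\sigma_0 P-\bar\sigma_0 P^*\big)$, and verify, by multiplying on the left by $(\sigma_0 I{-}A)$ and on the right by $(\bar\sigma_0 I{-}A^*)$ and expanding, the identity
\[
2\pi\,\mu(\sigma_0,A)+R\,I=R\,(\sigma_0 I{-}A)^{-1}\big(AA^*-\tfrac{1}{R^2}I\big)(\bar\sigma_0 I{-}A^*)^{-1},
\]
where $|\sigma_0|^2=1/R^2$ is used. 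The right-hand side has the form $R\,T\,M\,T^*$ with $T=(\sigma_0 I{-}A)^{-1}$ and $M=AA^*-\tfrac1{R^2}I$ self-adjoint, so it is positive semidefinite as soon as $M\ge 0$.

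Finally I would invoke the hypothesis: $\|A^{-1}\|\le R$ is equivalent to $A^*A\ge\tfrac1{R^2}I$, and since $\|A^{-1}\|=\|(A^*)^{-1}\|$ (equivalently, $A$ and $A^*$ have the same smallest singular value), equally to $AA^*\ge\tfrac1{R^2}I$. Hence $M\ge 0$, so $2\pi\,\mu(\sigma_0,A)+R\,I\ge 0$, i.e.\ $\mu(\sigma_0,A)\ge-\tfrac{R}{2\pi}I$, which gives $\lambda_{min}(\mu(\sigma_0,A))\ge-\tfrac{R}{2\pi}$. The only genuinely delicate points are bookkeeping: fixing the phase conventions so that $\sigma_0-\omega=i\sigma'_0/R$ holds with the correct orientation and sign, and recognizing that the resolvent bound must be used to bound $AA^*$ (not $A^*A$) from below—legitimate precisely because $\|A^{-1}\|=\|(A^*)^{-1}\|$.
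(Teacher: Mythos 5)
Your proof is correct and takes essentially the same route as the paper: the same normalization $\omega=0$, $|\sigma_0|=1/R$, and the same key factorization $2\pi\mu(\sigma_0,A)+RI = R\,(\sigma_0 I-A)^{-1}\big(AA^*-\tfrac{1}{R^2}I\big)(\bar\sigma_0 I-A^*)^{-1}$. The only cosmetic difference is the final step: the paper makes the positivity of the middle factor evident by conjugating once more through $B=A^{-1}$ (writing it as $rA(R^2I-BB^*)A^*$ with $\|B\|\leq R$), whereas you invoke $\|A^{-1}\|=\|(A^*)^{-1}\|$ to get $AA^*\geq \tfrac{1}{R^2}I$ directly; these are equivalent one-line observations.
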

\begin{proof} It suffices to consider the case $\omega=0$, $\sigma_0=r\,e^{-i\theta }$,
$\sigma'_0=-i\,e^{-i\theta }$, with $r=1/R$.  Then, with $B=A^{-1}$,
\begin{align*}
2\pi  \mu (\sigma_0 ,A)+RI&=-\big(e^{-i\theta }(\sigma_0 I{-}A)^{-1}+e^{i\theta }(\bar\sigma_0 I{-}A^*)^{-1}-RI\big)\\
&=-R(\sigma_0 I{-}A)^{-1}(r^2I{-}AA^*)(\bar\sigma_0 I{-}A^*)^{-1}
\\
&=r(\sigma_0 I{-}A)^{-1}A(R^2I{-}BB^*)A^*(\bar\sigma_0 I{-}A^*)^{-1}\geq 0,
\end{align*}
 since $\|B\|\leq R$.
\end{proof}

\noindent{\bf Remark.} {\it If $\sigma'_0$ denotes the unit tangent and if $\sigma'_0/i$ is the outward normal in a boundary point $\sigma_0$ of an open set $\Omega$, the assumption $\sigma_0{-}\omega=i\sigma '_0/R$ means that $\Omega$ and the exterior of disk $\{z\in\C\,:|z{-}\omega|^{-1}\leq R\}$ are tangent in $\sigma _0$ and have the same outward normal in this point.}\medskip

We now use $w(A)=\sup\{|\langle Av,v\rangle|\,: v\in H, \|v\|=1\}$ which is called the numerical radius of $A$.
\begin{lemma}\label{lem7}
Assume $\sigma_0{-}\omega=i\sigma '_0/R$ with $R>0$. If  $w((A{-}\omega I)^{-1})\leq R$, then  $\lambda _{min}(\mu (\sigma_0,A))\geq -\frac{R}{\pi}$.
\end{lemma}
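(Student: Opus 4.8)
The plan is to follow the proof of Lemma~\ref{lem6} as closely as possible: perform the same normalization, factor $2\pi\mu(\sigma_0,A)$ in the same way, but replace the norm inequality $R^2I{-}BB^*\geq0$ (which is what forced positivity there) by an argument that uses only the weaker numerical-radius hypothesis. This substitution is exactly why the bound degrades by a factor of two, from $-R/(2\pi)$ to $-R/\pi$. Concretely, I would first reduce to $\omega=0$, $\sigma_0=r\,e^{-i\theta}$, $\sigma'_0=-i\,e^{-i\theta}$ with $r=1/R$, and set $B=A^{-1}$. From the definition \eqref{eq5} one gets $2\pi\mu(\sigma_0,A)=-(T{+}T^*)$ with $T:=e^{-i\theta}(\sigma_0I{-}A)^{-1}$. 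Using the factorization $\sigma_0I{-}A=-A(I{-}r\,e^{-i\theta}B)$ and introducing $D:=r\,e^{-i\theta}B$, a short computation collapses $T$ to the compact form $T=R\,I-R(I{-}D)^{-1}$, which yields directly (writing $\Re X:=\tfrac12(X{+}X^*)$ for the Hermitian part of an operator)
\[
2\pi\,\mu(\sigma_0,A)+2R\,I=2R\,\Re\big((I{-}D)^{-1}\big).
\]

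The heart of the matter is then to show $\Re\big((I{-}D)^{-1}\big)\geq0$. Here I would bring in the hypothesis in the form $w(D)=r\,w(B)\leq rR=1$, which gives $\Re D\leq I$ and hence $\Re(I{-}D)=I-\Re D\geq0$. The key elementary identity is that for any invertible $F$ one has $\Re(F^{-1})=(F^*)^{-1}\,\Re(F)\,F^{-1}$ (immediate from $F^{-1}+(F^*)^{-1}=(F^*)^{-1}(F{+}F^*)F^{-1}$). Applying it to $F=I{-}D$ gives $\Re\big((I{-}D)^{-1}\big)=(I{-}D^*)^{-1}(I-\Re D)(I{-}D)^{-1}\geq0$, whence $2\pi\,\mu(\sigma_0,A)+2R\,I\geq0$ and therefore $\lambda_{min}(\mu(\sigma_0,A))\geq-R/\pi$, as claimed. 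The invertibility of $I{-}D$ (equivalently of $\sigma_0I{-}A$) is guaranteed since $\sigma_0\notin\mathrm{Sp}(A)$, so all manipulations are legitimate.

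The main obstacle is conceptual rather than computational: the numerical-radius bound $w(B)\leq R$ does not control $\|B\|$ beyond $\|B\|\leq2\,w(B)$, so the positivity of $R^2I{-}BB^*$ exploited in Lemma~\ref{lem6} is simply unavailable, and one cannot recover the result by applying that lemma with $R$ replaced by $2R$, since the geometric hypothesis $\sigma_0{-}\omega=i\sigma'_0/R$ pins down $r=1/R$ and would then refer to a different boundary point. The device that circumvents this is precisely the passage from the resolvent of $A$ to $(I{-}D)^{-1}$ together with the real-part/inverse identity above: it converts the one-sided bound $\Re D\leq I$ (which is all that $w(D)\leq1$ delivers) into genuine operator positivity, at the cost of the extra factor of two in the final estimate.
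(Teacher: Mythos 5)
Your proof is correct and is essentially the paper's own argument in different packaging: after the identical normalization, your identity $\Re(F^{-1})=(F^*)^{-1}\,\Re(F)\,F^{-1}$ applied to $F=I-re^{-i\theta}A^{-1}$ reproduces exactly the paper's factorization $2\pi\mu(\sigma_0,A)+2RI=R(\sigma_0 I{-}A)^{-1}A\,(2I{-}e^{-i\theta}B{-}e^{i\theta}B^*)\,A^*(\bar\sigma_0 I{-}A^*)^{-1}$ with $B=rA^{-1}$, since $(\sigma_0 I{-}A)^{-1}A=-(I{-}D)^{-1}$ and $2I{-}e^{-i\theta}B{-}e^{i\theta}B^*=2(I-\Re D)$. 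Both arguments draw positivity from the same source: $w(D)\leq 1$ gives $I-\Re D\geq 0$, which is then conjugated by the resolvent factor.
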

\begin{proof} It suffices to consider the case $\omega=0$, $\sigma_0=r\,e^{-i\theta }$,
$\sigma '_0=-i\,e^{-i\theta }$, with $r=1/R$; we set $B=r\,A^{-1}$, then
\begin{align*}
2\pi  \mu (\sigma_0 ,A)+2RI&=R\big(2I {-}re^{-i\theta }(\sigma_0 I{-}A)^{-1}{-}re^{i\theta }(\bar\sigma_0 I{-}A^*)^{-1}\big)\\
&=R(\sigma_0 I{-}A)^{-1}(2\,AA^*{-}re^{-i\theta }A^*{-}re^{i\theta }A)(\bar\sigma_0 I{-}A^*)^{-1}
\\
&=R(\sigma_0 I{-}A)^{-1}A(2\,I{-}e^{-i\theta} B{-}e^{i\theta}B^*)A^*(\bar\sigma_0 I{-}A^*)^{-1}\geq 0,
\end{align*}
 since $w(B)\leq 1$.
\end{proof}

\section{Example 1: an annulus}
 We consider the annulus $\Omega=\mathcal A_R=\{z\in\C\,: r<|z|<R\}$, with $R>1$, $r=1/R$, and an invertible operator $A$ which satisfies $\|A\|< R$ and $\|A^{-1}\|<R$. 
Let us denote by $\Gamma _1=\{z\in\C\,: |z|=R\}$ and $\Gamma _2=\{z\in\C\,: |z|=r\}$ the two components of the boundary. It is clear that
\begin{align*}
&\text{if }\sigma _0\in \Gamma _1 \text{  then }\int_{\Gamma _1}|\mu (\sigma ,\sigma_0)|\,ds=1
\text{  and  } \int_{\Gamma _2}|\mu (\sigma ,\sigma_0)|\,ds=\frac4\pi \arcsin(r^2)<2,\\
&\text{if }\sigma _0\in \Gamma _2 \text{  then }\int_{\Gamma _2}|\mu (\sigma ,\sigma_0)|\,ds=1
\text{  and  } \int_{\Gamma _1}|\mu (\sigma ,\sigma_0)|\,ds=2,
\end{align*}
therefore $c_1\leq 3$. In fact, we can improve upon this estimate and show that $c_1 = 1$. 
\begin{lemma}\label{lem8}
For any rational function $f$ bounded by $1$ in ${\mathcal A}_R$, the associated function $g(z)$ defined by
\begin{equation}\label{defn_of_g}
g(z) = \frac{1}{2 \pi i} \int_{\partial {\mathcal A}_R} \overline{f( \sigma )} \frac{d \sigma}{\sigma - z} ,~~
z \in {\mathcal A}_R ,
\end{equation}
satisfies $| g(z) | \leq 1$ in ${\mathcal A}_R$.
\end{lemma}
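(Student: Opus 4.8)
The plan is to compute the Laurent coefficients of $g$ explicitly and then to recognize the resulting series as a Poisson average of a \emph{reflected} copy of $f$. Since $f$ is rational and bounded by $1$ on $\mathcal A_R$, it has no poles in $\clos{\mathcal A_R}$ and hence is holomorphic on a neighborhood of the closed annulus; write its Laurent expansion $f(z)=\sum_{n\in\Z}a_n z^n$, convergent for $r<|z|<R$. The essential structural fact I would exploit is the symmetry $r=1/R$: the map $w\mapsto 1/\bar w$ preserves $\mathcal A_R$, so the function $F(w):=\overline{f(1/\bar w)}$ is holomorphic in $\mathcal A_R$, satisfies $|F|\le 1$ there, and has $\overline{a_{-j}}$ as the coefficient of $w^j$ in its Laurent expansion.

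First I would split the defining integral \eqref{defn_of_g} into its contributions over the outer circle $\Gamma_1$ (counterclockwise) and the inner circle $\Gamma_2$ (clockwise). On $\Gamma_1$ I expand $1/(\sigma-z)$ in powers of $z/\sigma$ (valid since $|z|<R$), on $\Gamma_2$ in powers of $\sigma/z$ (valid since $|z|>r$), and integrate term by term against the Fourier series of $\overline{f}$ on each circle. Using $f(\rho e^{i\theta})=\sum_n a_n\rho^n e^{in\theta}$ together with $r=1/R$, the orthogonality relations collapse each sum and give
\[
g(z)=\sum_{k\ge0}\overline{a_{-k}}\,R^{-2k}z^k+\sum_{p\ge1}\overline{a_p}\,R^{-2p}z^{-p}=\sum_{j\in\Z}\overline{a_{-j}}\,R^{-2|j|}z^j .
\]
In terms of $F$, whose $j$-th Laurent coefficient is $c_j=\overline{a_{-j}}$, this reads $g(z)=\sum_{j\in\Z}c_j\,R^{-2|j|}z^j$; the sole effect of the conjugate Cauchy transform has been to damp the $j$-th Laurent coefficient of $F$ by the factor $R^{-2|j|}$.

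The final step is to read this damping as convolution with a Poisson kernel. Writing $z=\rho e^{i\theta}$ with $r<\rho<R$ and setting $t=R^{-2}\in(0,1)$, I claim that
\[
g(\rho e^{i\theta})=\frac1{2\pi}\int_0^{2\pi}F(\rho e^{i\psi})\,P_t(\theta-\psi)\,d\psi,\qquad P_t(\alpha):=\sum_{j\in\Z}t^{|j|}e^{ij\alpha}=\frac{1-t^2}{1-2t\cos\alpha+t^2},
\]
which follows by inserting the Fourier series $F(\rho e^{i\psi})=\sum_j c_j\rho^j e^{ij\psi}$ and using orthogonality to recover exactly $\sum_j c_j\rho^j t^{|j|}e^{ij\theta}$. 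Since $P_t\ge0$ and $\frac1{2\pi}\int_0^{2\pi}P_t(\alpha)\,d\alpha=1$, and since $\rho e^{i\psi}\in\mathcal A_R$ forces $|F(\rho e^{i\psi})|\le1$, the representation yields $|g(\rho e^{i\theta})|\le1$ for every $z\in\mathcal A_R$; note that this argument is self-contained and does not even invoke the maximum principle.

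I expect the main obstacle to lie in finding the right reformulation rather than in any hard estimate: once the reflected function $F$ and the Poisson structure are identified, the bound is automatic. The points requiring care are the bookkeeping of orientations and signs when splitting the contour, the verification that $w\mapsto 1/\bar w$ maps $\mathcal A_R$ onto itself (so that $F$ is holomorphic and bounded by $1$), and the justification of the term-by-term integration — legitimate because $f$ is holomorphic on a neighborhood of $\clos{\mathcal A_R}$, so all the Laurent and Fourier series involved converge on the relevant circles.
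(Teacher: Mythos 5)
Your proof is correct, but it takes a genuinely different route from the paper's. The paper first uses the two symmetries you mention (rotation $f\mapsto f_\theta$ and inversion $f\mapsto\tilde f$) together with the maximum principle to reduce the claim to the single boundary point $z=r$; it then evaluates the double-layer kernel $\mu(\sigma,r)$ explicitly on both circles, uses the moment condition $\int_{\partial\mathcal A_R}f(\sigma)\,d\sigma/\sigma=0$ to eliminate the inner-circle contribution, and concludes from the positivity and unit mass of the resulting kernel on the outer circle. You instead work globally in the interior: you compute the Laurent coefficients of $g$, obtaining $g(z)=\sum_{j\in\Z}\overline{a_{-j}}\,R^{-2|j|}z^j$ (I checked this, including the orientation bookkeeping on the two circles), and read the damping as circular convolution of the reflected function $F(w)=\overline{f(1/\bar w)}$ with the Poisson kernel $P_{R^{-2}}$. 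The two arguments are secretly the same at their core: the paper's positive kernel $\mu(\sigma,r)-\frac1{2\pi R}=\frac1{2\pi R}\,\frac{R^4-1}{R^4-2R^2\cos\theta+1}$ on the outer circle is exactly $\frac1{2\pi R}P_{R^{-2}}(\theta)$, so the same Poisson structure underlies both. What your version buys: it never needs the boundary extension of $g$ from Lemma 1, nor the maximum principle, nor a separate verification that the map $f\mapsto g$ commutes with rotations and inversion (that equivariance is manifest from the coefficient formula), and it produces an exact interior representation of $g$ as a Poisson average of $F$, making the bound $|g|\le 1$ transparent. What the paper's version buys: it is shorter once Lemma 1 is available, and its kernel-positivity viewpoint is the one that transfers to the M\"obius-transformed domains of Theorem 9 and to the cutout regions of section 6. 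Your two flagged caveats are both handled correctly: a rational $f$ bounded on the open annulus can have no poles on $\clos{\mathcal A_R}$ (a boundary pole would violate boundedness from inside), so all the series converge absolutely and uniformly on $\clos{\mathcal A_R}$ and term-by-term integration is legitimate, and $r=1/R$ indeed makes $w\mapsto 1/\bar w$ an involution of $\mathcal A_R$.
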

\begin{proof}
Recall that $g$ has a continuous extension to the boundary given for $\sigma_0 \in \partial {\mathcal A}_R$ by
\[
g( \sigma_0 ) = \int_{\partial {\mathcal A}_R} \overline{f( \sigma )} \mu( \sigma , \sigma_0 )\,ds ,~~~
\mbox{ with } \mu ( \sigma (s) , z ) = \frac{1}{2 \pi i} \left( \frac{\sigma' (s)}{\sigma (s) -z} -
\frac{\overline{\sigma' (s)}}{\overline{\sigma(s)} - \bar{z}} \right) ,
\]
where $s$ denotes arclength on $\partial {\mathcal A}_R$.  Let $f_{\theta} (z) = f( z e^{i \theta} )$,
$g_{\theta} (z) = g( z e^{i \theta} )$, $\tilde{f} (z) = f(1/z)$, and $\tilde{g} (z) = g(1/z)$.
Then it is easily verified that if we replace $f$ by $f_{\theta}$ (resp.~by $\tilde{f}$), the 
associated function $g$ in (\ref{defn_of_g}) is replaced by $g_{\theta}$ (resp.~by $\tilde{g}$).
From this and the maximum principle, it suffices to show that $| f | \leq 1$ in ${\mathcal A}_R$
implies $| g(r) | \leq 1$.
Note that
\[
\mu ( \sigma , r) = - \frac{R}{2 \pi} ,~~\mbox{ if } \sigma = r e^{-i \theta} ,
\]
\[
\mu ( \sigma , r ) = \frac{R}{\pi} \frac{R^2 - \cos \theta}{R^4 - 2 R^2 \cos \theta + 1} ,~~\mbox{ if } 
\sigma = R e^{i \theta} .
\]
Let $\Gamma_R = \{ z \in \mathbb{C} : | z | = R \}$, $\Gamma_r = \{ z \in \mathbb{C} : |z| = r \}$.
On $\Gamma_r$, we write $\sigma (s) = r e^{-i \theta (s)}$, where $s = r \theta (s)$, $ds = r d \theta$,
$d \sigma = -i \sigma d \theta = -i \sigma R ds$.  Then
\[
\overline{g(r)} = \int_{\Gamma_R} f( \sigma ) \mu ( \sigma , r )\,ds - \frac{R}{2 \pi} 
\int_{\Gamma_r} f( \sigma )\,ds =
\int_{\Gamma_R} f( \sigma ) \mu ( \sigma , r )\,ds + \frac{1}{2 \pi i} \int_{\Gamma_r}
\frac{f( \sigma )}{\sigma}\,d \sigma .
\]
Using the fact that $\int_{\partial {\mathcal A}_R} \frac{f( \sigma )}{\sigma}\,d \sigma = 0$,
we obtain
\[
\overline{g(r)} = \int_{\Gamma_R} f( \sigma ) \mu ( \sigma , r )\,ds - \frac{1}{2 \pi i} \int_{\Gamma_R}
\frac{f( \sigma )}{\sigma}\,d \sigma =
\int_{\Gamma_R} f( \sigma ) \left( \mu ( \sigma , r ) - \frac{1}{2 \pi R} \right)\,ds .
\]
Finally, we remark that if $\sigma = R e^{i \theta}$, then $\mu ( \sigma , r ) - \frac{1}{2 \pi R} =
\frac{1}{2 \pi R} \frac{R^4 - 1}{R^4 - 2 R^2 \cos \theta + 1} > 0$, which implies
\[
| g(r) | \leq \int_{\Gamma_R} \left( \mu ( \sigma , r ) - \frac{1}{2 \pi R} \right)\,ds = 1 .
\]
\end{proof}

Now, we introduce the self-adjoint operator $\nu(\sigma ,A)=\mu (\sigma ,A){-}\frac1{2\pi i}\frac{\sigma '}{\sigma }I$. If $\sigma \in\Gamma _1$, we may write $\sigma =Re^{i\theta }$, $s=R\,\theta $, thus $\nu(\sigma ,A)=\mu (\sigma ,A)-\frac{1}{2\pi R}I\geq 0$ follows from Lemma\,\ref{lem5}. Similarly, if $\sigma \in\Gamma _2$, we may write $\sigma =r\,e^{-i\varphi  }$, $s=r\,\varphi $, thus $\nu(\sigma ,A)=\mu (\sigma ,A)+\frac{R}{2\pi }I\geq 0$ follows from Lemma\,\ref{lem6}.

Now, we consider a rational function  $f$ bounded by 1 in $\Omega$ and note that $\int_{\partial \Omega} f(\sigma )/\sigma \,d\sigma=0$.
Whence $\int_{\partial \Omega}f(\sigma )(\mu(\sigma ,A)-\nu (\sigma ,A))\,ds=\frac1{2\pi i}\int_{\partial \Omega}f(\sigma )/\sigma \,d\sigma =0$. We deduce
\[
\|S(f,A)\|=\Big\|\int_{\partial \Omega}f(\sigma )\nu (\sigma ,A)\,ds\Big\|
\leq  \Big\|\int_{\partial \Omega}\nu(\sigma ,A)\,ds\Big\|=\Big\|\int_{\partial \Omega}\mu(\sigma ,A)\,ds\Big\|=2. 
\]
We apply Theorem\,\ref{th} with $c_1 = 1$, $\gamma =0$, $c_2=1$, and obtain that $\mathcal A_R$ is a $K(R)$-spectral set for $A$ with some optimal constant $K(R)\leq 1{+}\sqrt{2}$.\medskip

\noindent{\bf Remark.} 
{\it If we assume only that
$\|A\|\leq R$ and $\|A^{-1}\|\leq R$ then, for all $R'>R$, $\mathcal A_{R'}$ is a $(1 + \sqrt{2})$-spectral set for $A$; taking the limit as $R'\to R$, we obtain that $\mathcal A_R$ is a $(1 + \sqrt{2})$-spectral set for $A$.
}

\noindent{\bf Remark.} {\it This bound $K(R)\leq 1{+}\sqrt{2}$ improves the previous one given in\,\cite{babecr}}
  \[
  K(R) \le \min\big( 2 +  \frac{R+1}{\sqrt{R^2+R+1}},\max(3,2+\sum_{n\geq 1}\frac{4}{1+R^{2n}})\big).
\]
{\it A first bound for this constant had been obtained by
 Shields\,\cite{shields} but this bound was unbounded for $R$ close to $1$. 
 Note also that a lower bound is known\,\cite{babecr}: $ K(R)\geq \gamma (R)$ with} 
\[
\gamma (R):=2(1{-}R^{-2})\prod_{n\geq 1}\Big(\frac{1-R^{-8n}}{1-R^{4-8n}}\Big)^2\leq 2.
\]
\medskip

 This result is still true if $\Omega$ is the intersection of two disks of the Riemann sphere.
\begin{theorem}
Let us consider  $\Omega=D_1\cap D_2$ with $D_1=\{ z \in\C\,: |z{-}\omega_1|< R_1\}$,
 $D_2=\{ z \in\C\,: |z{-}\omega_2|>1/ R_2\}$.
 If the operator $A$ satisfies $\|A{-}\omega_1I\|\leq  R_1$ and $\|(A{-}\omega_2I)^{-1}\|\leq  R_2$,
 then $\Omega$ is a $(1 + \sqrt{2})$-spectral set for $A$.
\end{theorem}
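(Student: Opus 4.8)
The plan is to reduce the claim to the concentric annulus already treated above by a Möbius transformation of the Riemann sphere; this is what the phrase ``intersection of two disks of the Riemann sphere'' invites. First I would reinterpret the two hypotheses as von Neumann spectral-set conditions. Since $\|A{-}\omega_1I\|\le R_1$, applying von Neumann's inequality to the contraction $(A{-}\omega_1I)/R_1$ shows that $\overline{D_1}$ is a spectral set (constant $1$) for $A$. Since $\|(A{-}\omega_2I)^{-1}\|\le R_2$, setting $S:=(A{-}\omega_2I)^{-1}$ gives $\|S\|\le R_2$, so $\overline{D(0,R_2)}$ is a spectral set for $S$; pushing this through the Möbius change of variable $s\mapsto\omega_2{+}1/s$, which carries $\overline{D(0,R_2)}$ onto the closed exterior disk $\overline{D_2}$ and $S$ onto $\omega_2{+}S^{-1}=A$, shows that $\overline{D_2}$ is a spectral set for $A$. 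In particular $\mathrm{Sp}(A)\subset\overline{D_1}\cap\overline{D_2}$ and the two bounding circles are nested, so $\Omega$ is a genuine doubly connected annular region.

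The device I would use twice is the covariance of $K$-spectral sets under Möbius maps: if $\phi$ is a Möbius transformation whose pole lies off $\mathrm{Sp}(A)$ and $X$ is a $K$-spectral set for $A$, then $\phi(X)$ is a $K$-spectral set for $\phi(A):=(aA{+}bI)(cA{+}dI)^{-1}$. This follows at once from the functional-calculus identity $(g\circ\phi)(A)=g(\phi(A))$ and the equality $\sup_{\phi(X)}|g|=\sup_X|g\circ\phi|$, since $g\circ\phi$ is rational and bounded on $X$ whenever $g$ is rational and bounded on $\phi(X)$; the spectral mapping theorem gives $\mathrm{Sp}(\phi(A))=\phi(\mathrm{Sp}(A))$.

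Next comes the geometry. I would choose $\phi$ sending the common pair of inverse points of the two bounding circles to $0$ and $\infty$; this maps both circles to circles centred at the origin, and a subsequent scaling $w\mapsto cw$ normalizes the image to the symmetric annulus $\mathcal{A}_R=\{1/R<|w|<R\}$. Both inverse points lie outside $\Omega$ (one in the hole, one beyond the outer circle), hence off $\mathrm{Sp}(A)\subset\Omega$, so $B:=\phi(A)$ is well defined with $\mathrm{Sp}(B)\subset\mathcal{A}_R$. Applying the covariance to the two spectral sets of the first paragraph, $\overline{D(0,R)}$ and the closed exterior $\{|w|\ge 1/R\}$ are spectral sets for $B$; testing the coordinate function $g(w)=w$ on the first and $g(w)=1/w$ on the second (legitimate since $0\notin\mathrm{Sp}(B)$) yields $\|B\|\le R$ and $\|B^{-1}\|\le R$.

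With these bounds in hand, the annulus result established above for $\mathcal{A}_R$ (in the non-strict form of the Remark following it) shows that $\mathcal{A}_R$ is a $(1{+}\sqrt2)$-spectral set for $B$. Transporting this back through $\phi^{-1}$, again Möbius with pole off $\mathrm{Sp}(B)\subset\mathcal{A}_R$, the covariance statement gives that $\Omega=\phi^{-1}(\mathcal{A}_R)$ is a $(1{+}\sqrt2)$-spectral set for $\phi^{-1}(B)=A$, which is the assertion. I expect the main obstacle to be the careful bookkeeping of the two passages through the functional calculus on the Riemann sphere --- keeping each Möbius pole off the relevant spectrum and justifying the composition and sup-norm identities in the presence of the point at infinity introduced by $D_2$; by contrast, the existence of the concentrizing $\phi$ and the normalization $r=1/R$ are classical and present no difficulty.
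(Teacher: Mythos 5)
There is a genuine gap: your whole construction rests on the assertion that ``the two bounding circles are nested, so $\Omega$ is a genuine doubly connected annular region,'' and this does not follow from the hypotheses. The theorem allows $\partial D_1$ and $\partial D_2$ to cross: take $\omega_1=0$, $R_1=1$, $\omega_2=1$, $R_2=2$ and $A=0$; then $\|A{-}\omega_1I\|=0\leq R_1$ and $\|(A{-}\omega_2I)^{-1}\|=1\leq R_2$, yet the circles $|z|=1$ and $|z{-}1|=\tfrac12$ intersect in two points and $\Omega$ is a simply connected lens, not an annulus. In that situation the two circles have no common pair of inverse points, and no M\"obius transformation can make them concentric: a M\"obius map is a homeomorphism of the Riemann sphere carrying circles to circles, so it preserves the number of intersection points, and concentric circles are disjoint. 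Your concentrizing map $\phi$ therefore does not exist in this case --- the reduction to $\mathcal A_R$ fails in principle, not merely in bookkeeping. The same obstruction appears when the circles are tangent. What you have proved is the theorem under the extra hypothesis $\partial D_2\subset D_1$, which is exactly the first of the three cases in the paper's proof, and there your route (von Neumann to get $\|B\|\leq R$ and $\|B^{-1}\|\leq R$ for $B=\varphi(A)$, M\"obius covariance of $K$-spectral sets, then the annulus result in its non-strict form) coincides with the paper's.

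To close the gap you need the two remaining cases, and the second one requires a different tool. When $\partial D_1\cap\partial D_2$ consists of two points, send a point $c\in\partial D_2$ with $c\notin D_1$ to infinity via $\varphi(z)=1/(z{-}c)$: then $D_1'=\varphi(D_1)$ is a disk, $D_2'=\varphi(D_2)$ is a half-plane, and von Neumann's inequality shows both are spectral sets for $B=\varphi(A)$; in particular $\|B{-}\gamma_1I\|\leq R_1'$ and $W(B)\subset D_2'$, hence $W(B)\subset\Omega':=\overline{D_1'\cap D_2'}$. Since $\Omega'$ is \emph{convex}, the remark in section 4 on convex domains containing the numerical range (the Crouzeix--Palencia bound) gives that $\Omega'$ is a $(1{+}\sqrt2)$-spectral set for $B$, and M\"obius covariance transports this back to $\Omega$ and $A$; the annulus machinery plays no role here. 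Finally, the case where the circles are tangent at a single point is obtained from the nested case by enlarging $D_1$ slightly and passing to the limit --- a limiting argument your write-up would also need to include.
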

\begin{proof} We first consider the case where $\partial D_2\subset D_1$. Then,
 there exist $R$ and a Moebius function $\varphi(z)=\frac{az+b}{cz+d} $ such that $\varphi $ is one to one from $\Omega$ onto $\mathcal A_{R}$, from $D_1$ onto $\{z\in\C\,:|z|<R\}$, and
 from $D_2$ onto $\{z\in\C\,:|z|>1/R\}$; we set $B=\varphi (A)$. The von Neumann inequality shows that $\|B\|\leq R$ and $\|B^{-1}\|\leq R$; therefore $\mathcal A_R$ is a $(1 + \sqrt{2})$-spectral set for $B$, which is clearly equivalent to $\Omega$ is a $(1 + \sqrt{2} )$-spectral set for $A$.

We now consider the case where the intersection $\partial D_1\cap\partial D_2$ is two distinct points. Then, using $\varphi (z)=1/(z{-}c)$ with $c\in \partial D_2$, $c\notin D_1$,
 $D' _1=\varphi (D_1)$ is some disk $\{z\,: |z{-}\gamma _1 |<R'_1\}$ and $D' _2=\varphi (D_2)$ is a half-plane. From von Neumann, $B=\varphi (A)$ satisfies $\|B{-}\gamma _1I\|\leq R'_1$ and $W(B)\subset D'_2$; a fortiori, $W(B)\subset \Omega':=\clos{D'_1\cap D'_2}$.  Since $\Omega'$ is convex, $\Omega'$ is a $(1{+}\sqrt2)$-spectral set for $B$, thus $\Omega$ is a $(1{+}\sqrt2)$-spectral set for $A$.

 The case where the intersection $\partial D_1\cap\partial D_2$ is only one point
 follows from the case $\partial D_2\subset D_1$ by increasing $D_1$ in $D_1'$ and then letting $D'_1$ tend to $D_1$.
\end{proof}

\section{Example 2: another domain with a hole or cutout}

We now consider the case where $\Omega=\Omega_1\cap\Omega_2$ is the intersection of a bounded convex domain $\Omega_1$ with the exterior of a disk 
$\Omega_2=\{z\in\C\,: |z{-}\omega|^{-1}< R\}$.
Then, arguing as at the start of the previous section, it can be seen that
$\max_{\sigma _0}\int_{\partial \Omega}|\mu (\sigma ,\sigma_0)|\,ds=3$, therefore $c_1\leq 3$. 
 We now assume $\clos{W(A)}\subset \Omega_1$, $w((A{-}\omega I)^{-1})<R$ and that
either $\partial \Omega_2\subset\Omega_1$, or
 the number of intersection points of $\partial \Omega_1$ and $\partial \Omega_2$ is finite.

Let $f$ be a rational function bounded by 1 in $\Omega$.
We consider $\Gamma _1=\partial \Omega_1\cap\clos{\Omega_2}$ and $\Gamma _2=\partial \Omega_2\cap\clos{\Omega_1}$, then $\partial \Omega=\Gamma _1\cup\Gamma _2$. We write $S(f,A)=S_1{+}S_2{+}S_3$ with
\[
S_1=\int_{\Gamma _1} f(\sigma )\mu (\sigma ,A)\,ds,\quad
S_2=\int_{\Gamma_2 } f(\sigma )\nu (\sigma ,A)\,ds,\quad
S_3=-\frac{R}\pi\int_{\Gamma _2} f(\sigma )\,ds\,I,
\]
with $\nu(\sigma ,A)=\mu (\sigma ,A)+\frac{R}\pi I$.  
If $\sigma \in \partial \Omega _1$, it holds $\mu (\sigma ,A)\geq 0$ since $\clos{W(A)}\subset \Omega_1$. We deduce 
\[
\|S_1\|\leq \big\|\int_{\Gamma _1}\mu (\sigma ,A)\,ds\big\|\leq \big\|\int_{\partial \Omega _1}\mu (\sigma ,A)\,ds\big\|=2.
\]
If $\sigma \in \Gamma_2 $, Lemma\,\ref{lem7}
shows that $\nu (\sigma ,A)\geq 0$, hence 
\[
\|S_2\|\leq \big\|\int_{\Gamma_2}\nu (\sigma ,A)\,ds\big\|
\leq \big\|\int_{\partial \Omega_2}\nu (\sigma ,A)\,ds\big\|=\frac R\pi \int_{\partial \Omega_2}\,ds
=2,
\]
since $\int_{\partial \Omega_2}\mu (\sigma ,A)\,ds =0$.
It is clear that $\|S_3\|\leq 2$. 

Therefore $c_2\leq 3$; applying Theorem\,\ref{th} with $c_1 =3$, $\gamma =0$, $c_2=3$, we obtain that $\clos\Omega$ is a $(3{+}2\sqrt3)$-spectral set for $A$.\medskip

\noindent
{\bf Remark.} {\it In particular, we can apply this result to the annulus $\mathcal A_{R}$, but now with $c_1 =1$, $\gamma =0$, $c_2=3$: under the assumptions $w(A)\leq R$ and $w(A^{-1})\leq R$, the annulus is a $(3{+}\sqrt{10})$-spectral set for $A$.
It improves, for $1<R<1.8837$, the previous estimates;
a uniform bound was not known up to now. 
The previous estimates were based on the splitting
\[
f(z)=f_1(z){+}f_2(z)\quad \text{with}\quad  f_1(z)=\sum_{n\geq 0}a_nz^n,\   f_2(z)=\sum_{n<0}a_nz^n,
\] 
and an estimate of $\|f_1\|_{D_1}+\|f_2\|_{D_2}$. Here $D_1=\{z\in\C\,: |z|<R\}$, 
$D_2=\{z\in\C\,: |z|>R^{-1}\}$, $\|f\|_D=\sup\{|f(z)|\,:z\in D\}$. From our assumptions, $D_1$ and $D_2$ are 2-spectral sets for $A$, therefore $\|f(A)\|\leq \|f_1(A)\| {+}\|f_2(A)\|\leq 2(\|f_1\|_{D_1}+\|f_2\|_{D_2})$. Two estimates $\|f_1\|_{D_1}+\|f_2\|_{D_2}\leq \max(3, 2+\psi (R))$, with $\psi (R)=\sum_{n\geq 1}\frac{4}{R^{2n}-1}$ and 
$\|f_1\|_{D_1}+\|f_2\|_{D_2}\leq 2+\frac{1}{\pi }\int_0^{\pi}\Big|\frac{R^2+e^{i\theta}}{R^2-e^{i\theta}}\Big|\,d\theta$ follow from \cite[Lemme 2.1(a) and (b)]{crzx2}. Choosing the best established estimate in each case, the annulus is a $K(R)$-spectral set for $A$, with
\begin{align*}
K(R)&\leq  3{+}\sqrt{10}\simeq 6.1623\hskip1.5cm\text{ if  }1<R<1.8837,\\
K(R)&\leq  4+\frac{2}{\pi }\int_0^{\pi}\Big|\frac{R^2+e^{i\theta}}{R^2-e^{i\theta}}\Big|\,d\theta
\hskip.3cm\text{ if  }
1.8837<R<2.3639,\\
K(R)&\leq  4{+}\sum_{n\geq 1}\frac{8}{R^{2n}-1}\hskip1.5cm\text{ if  }2.3639<R<2.3912,\\
K(R)&\leq  6\hskip4cm\text{ if  }2.3912<R.
\end{align*}}

\section{Some Applications}
The $K$-spectral sets derived in the previous sections can be used to give bounds
on the norm of the residual in the GMRES algorithm for solving a nonsingular 
linear system $Ax=b$ or on the error in an approximation to $f(A)b$ generated
by the rational Arnoldi algorithm.

\subsection{GMRES}
The GMRES algorithm generates, at each step $k$, an approximate solution $x_k$
for which the 2-norm of the residual, $b - A x_k$, is minimized over a Krylov subspace; that is,
\[
\| r_k \| = \min \{ \| p_k (A) r_0 \| : p_k \in {\cal P}_k ,~p_k (0) = 1 \} ,
\]
where ${\cal P}_k$ is the set of polynomials of degree at most $k$.  A bound independent
of the initial residual $r_0$ is
\[
\frac{\| r_k \|}{\| r_0 \|} \leq \min \{ \| p_k (A) \| : p_k \in {\cal P}_k ,~p_k (0) = 1 \} .
\]
It follows from \cite{crpa} that if $0 \notin W(A)$ then
\begin{equation}
\frac{\| r_k \|}{\| r_0 \|} \leq (1 + \sqrt{2} ) \min \{ \max_{z \in W(A)} | p_k (z) | :
p_k \in {\cal P}_k ,~p_k (0) = 1 \} , \label{crpaGMRES}
\end{equation}
and one thus obtains a bound on the GMRES residual norm in terms of an approximation
problem in the complex plane:  How small can a $k$th degree polynomial with value $1$
at the origin be on $W(A)$.

If $W(A)$ contains the origin, however, the bound (\ref{crpaGMRES}) is not useful.
One way to avoid this problem
was devised in \cite{ChoiGreen}:  Note that if $B = A^{1/m}$, then for $m$ large enough
$W(B)$ will not contain the origin, nor will the set $W(B )^m := \{ z^m : z \in W(B) \}$.
If $\varphi (z) = z^m$, then it follows from \cite{crpa} that
\[
\| p_k (A) \| = \| p_k \circ \varphi (B) \| \leq (1 + \sqrt{2} ) \max \{ | p_k \circ 
\varphi (z) | : z \in W(B) \} = ( 1 + \sqrt{2} ) \max \{ | p_k ( \zeta ) | : \zeta \in
W(B )^m \} .
\]
Unfortunately, this bound requires knowledge of $W(B)$.

A region described in section 6, consisting of the intersection of $W(A)$ and the exterior of a
disk about the origin of radius $1/R$, where $R$ is the numerical radius of $A^{-1}$
may provide a better bound.  In Figure~\ref{fig:1} we plot this region for the
Grcar matrix\footnote{gallery('grcar',100) in MATLAB} of order $n=100$.  As shown
in section 6, this is a $(3 + 2 \sqrt{3})$-spectral set for $A$.  Also shown in
the figure is the set $\exp (W( \log (A) ))$, which was shown in \cite{ChoiGreen} 
to be $\lim_{m \rightarrow \infty} [ W( A^{1/m} ) ]^m$ and hence
(after applying the result in \cite{crpa}) to be a $(1 + \sqrt{2})$-spectral 
set for $A$.  

\begin{figure}[ht]
\centerline{\epsfig{file=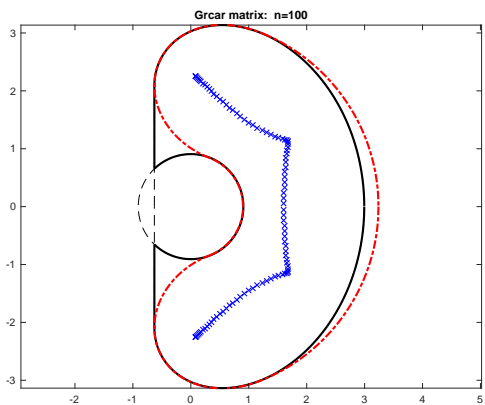,width=2.5in}}
\caption{Eigenvalues (x), boundary of $W(A)$ and circle about $0$ of radius $1/R$
where $R = w( A^{-1} )$ (thin dashed curves), and boundary of intersection of $W(A)$ with  
exterior of disk (thick solid curve).  This is a $(3 + 2 \sqrt{3})$-spectral set for $A$.
Also shown is the boundary of the set $\exp ( W( \log (A) ))$ (thick dash-dot curve), 
which was shown in \cite{ChoiGreen} (after applying \cite{crpa}) to be 
a $(1 + \sqrt{2})$-spectral set for $A$.}
\label{fig:1}
\end{figure}

\subsection{Rational Arnoldi algorithm}
Rational Krylov space methods (as well as standard Krylov space methods like the
Arnoldi algorithm) can be used to approximate the product of a function of a
matrix with a given vector: $f(A) b$.  The approximation at iteration $m$ is
of the form $r_m (A) b$, where $r_m = p_{m-1}/q_{m-1}$ is a rational function
with a prescribed denominator polynomial $q_{m-1} \in {\cal P}_{m-1}$.  The
rational Krylov space of order $m$ associated with $A$, $b$, and $q_{m-1}$
is defined as
\[
{\cal Q}_m (A,b) := [ q_{m-1} (A) ]^{-1} \mbox{span} \{ b, Ab, \ldots , A^{m-1} b \} .
\]
See, for example, \cite{Guttel} for an excellent review article.

Let $V_m \in \mathbb{C}^{n \times m}$ be an orthonormal basis for ${\cal Q}_m (A,b)$.
The rational Arnoldi approximation to $f(A) b$ from ${\cal Q}_m (A,b)$ is
\[
f_m^{\mbox{RA}} : = V_m f( A_m ) V_m^{*} b ,~~\mbox{ where }~ A_m := V_m^{*} A V_m .
\]
It is shown in \cite{Guttel} that if $f(A)b$ lies in the rational Krylov
subspace ${\cal Q}_m (A,b)$, then the rational Arnoldi approximation at step $m$ will be
exact: $f_m^{\mbox{RA}} = f(A) b$.  This is then used to show near-optimality of
the rational Arnoldi approximation to $f(A) b$.  Since $r_m (A) b = V_m r_m ( A_m ) V_m^{*} b$
for every rational function $r_m \in {\cal P}_{m-1}/ q_{m-1}$, we can write
\begin{eqnarray}
\| f(A) b - f_m^{\mbox{RA}} \| & = & \| f(A) b - V_m f( A_m ) V_m^{*} b - 
( r_m (A) b - V_m r_m ( A_m ) V_m^{*} b ) \| \nonumber \\
 & \leq & \| f(A) b - r_m (A) b \| + \| V_m ( f( A_m ) - r_m ( A_m ) ) V_m^{*} b \| \nonumber \\
 & \leq & ( \| f(A) - r_m (A) \| + \| f( A_m ) - r_m ( A_m ) \| ) \| b \| . \label{nearopt}
\end{eqnarray} 
Since $W( A_m ) \subset W(A)$, it follows, using the result in \cite{crpa}, that
$W(A)$ is a $(1 + \sqrt{2} )$-spectral set for both $A$ and $A_m$
and hence that
\begin{equation}
\| f(A) b - f_m^{\mbox{RA}} \| \leq 2 ( 1 + \sqrt{2} ) \| b \| 
\min \{ \max_{z \in W(A)} | f(z) - r_m (z) | : r_m \in {\cal P}_{m-1} / q_{m-1} \} . 
\label{bound_W(A)}
\end{equation}

To use the estimate (\ref{bound_W(A)}), the rational function $r_m$ should have no poles
in $W(A)$.  But if the function $f$ to be approximated has a pole in $W(A)$, then
it would be reasonable for $r_m$ to have one at the same point.  Here the
annulus of section 5, as well as the annulus or cutout region in section 6 might be 
useful for bounding the error in the approximation $f_m^{\mbox{RA}}$.  While
these regions are $K$-spectral sets for $A$, however, they might not be (with the same value of $K$)
for $A_m$.  The norm and numerical radius of $A_m$ are less than or equal to those of $A$,
but it is {\em not} guaranteed that the norm or numerical radius of $A_m^{-1}$
is less than or equal to that of $A^{-1}$.  Still, this is often the case,
and assuming that it is, one can use (\ref{nearopt}) to bound the error in
the rational Arnoldi approximation to $f(A)b$ in terms of the best uniform approximation 
to $f$ on one of these regions.

Taking $f(z) = 1/(1 - e^z )$ so that $f(A) = (I - e^A )^{-1}$, we used the
RKToolkit \cite{RKToolkit} to find a rational approximation to $f(A)b$ for
a random real vector $b$, again taking $A$ to be the Grcar matrix of size $n=100$.
We limited the number of poles to $m-1 = 5$ and ran routine rkfit to
find good pole placements for the rational Arnoldi algorithm.  As expected, it returned $0$
(which is inside $W(A)$) as one of the poles, and using the poles that it returned as the 
roots of $q_{m-1}$, we constructed an orthonormal basis $V_m$ 
for ${\cal Q}_m (A,b)$ and formed the rational Arnoldi approximation 
$f_m^{RA} = V_m f( A_m ) V_m^{*} b$.
The error $\| f(A) b - f_m^{RA} \| / \| b \|$ was about $2.7e-7$.
Evaluating the differences $| f(z) - \hat{r}_m (z) |$, where $\hat{r}_m$ is 
the rational function from routine rkfit, for $z$ in the annulus of section 5,
with outer radius $\| A \| \approx 3.2$ and inner radius $1/ \| A \|$, we found
the maximum difference to be about $9.4e-5$, leading to the upper bound
\[
\| f(A) b - f_m^{\mbox{RA}} \| / \| b \| \leq 2 ( 1 + \sqrt{2} )~9.4e-5 .
\]
The cutout region of section 6, which is the intersection of $W(A)$ with the exterior 
of a disk of radius $1/w( A^{-1} ) \approx 0.9$, provides a better bound.  
The maximum value of 
$| f(z) - \hat{r}_m (z) |$ on this set was about $2.7e-6$, 
leading to the error bound
\[
\| f(A) b - f_m^{\mbox{RA}} \| / \| b \| \leq 2 ( 3 + 2 \sqrt{3} ) ~2.7e-6 , 
\]
which must hold for every vector $b$ (provided that $A_m$ satisfies $w( A_m^{-1} ) \leq
w( A^{-1} )$, as it did in this case, so that this region is also a $(3 + 2 \sqrt{3} )$-spectral
set for $A_m$). A contour plot of $| f(z) - \hat{r}_m (z) |$
is shown in figure \ref{fig:2}(a), along with the annulus of section 5 and the cutout region
of section 6.  While $\hat{r}_m$ is {\em not} the best uniform approximation to $f$
on either of these regions, it is small enough to provide a reasonable upper bound
for the error in the rational Arnoldi approximation.

As another example, again taking $f(z) = 1/(1 - e^z )$, but now taking $A$ to be the 
matrix generated in MATLAB by typing `gallery('smoke',100)' (which is a 
$100$ by $100$ matrix with $1$'s on the superdiagonal, a $1$ in position $(100,1)$,
and powers of roots of unity along the diagonal), we ran routine rkfit
to find $m-1 = 5$ poles to use in a rational Arnoldi approximation to $f(A)b$, and it
again returned $0$ (which is inside $W(A)$) as one of the poles.  Using these poles in 
the rational Arnoldi
algorithm, with a random vector $b$, led to an error $\| f(A)b - f_m^{RA} \| / \| b \| 
\approx 2.0e-8$.  
While the difference $| f(z) - \hat{r}_m (z) |$ was large on the annulus of section 5, 
with outer radius $\max \{ \| A \| , \| A^{-1} \| \} \approx 31.8$ and inner radius
the reciprocal of this, it was small on the region of 
section 6 consisting of the intersection of $W(A)$ and the exterior of a disk
about $0$ of radius $1/w( A^{-1} ) \approx 0.04$.  Now this disk was a subset of $W(A)$,
so this was a different region with a hole in it.  The maximum value
of $|f(z) - \hat{r}_m (z)|$ on this region was about $7.9e-8$, leading to the bound
\[
\| f(A) b - f_m^{\mbox{RA}} \| / \| b \| \leq 2 ( 3 + 2 \sqrt{3} )~7.9e-8 ,
\]
which holds for all $b$ (again assuming that $w( A_m^{-1} ) \leq w( A^{-1} )$, as
it was in this case).
Figure \ref{fig:2}(b) shows a contour plot of $| f(z) - \hat{r}_m (z) |$.

\begin{figure}[ht]
\centerline{\epsfig{file=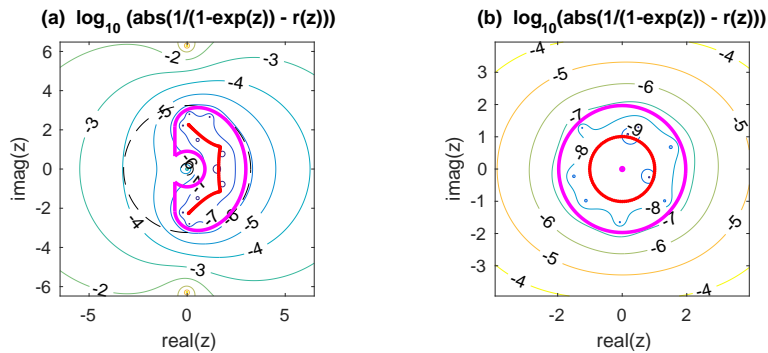,width=4in}}
\caption{Contour plot of $| f(z) - \hat{r}_m (z) |$ for (a) the
Grcar matrix and (b) the `smoke' matrix.  Dashed curve in (a) is
boundary of annulus with outer radius $\| A \|$, inner radius $1/ \| A \|$.
Thick solid curve in each plot is boundary of intersection of $W(A)$ with exterior of 
disk about $0$ of radius $1/ w( A^{-1} )$.  
Eigenvalues are marked with dots inside these regions.  (Note that in (b) the hole in
$W(A)$ is the tiny circle in the middle, while the dotted eigenvalues lie on the middle
circle.)
\label{fig:2}}
\end{figure}

\medskip
{\bf Acknowledgment:} The authors thank the American Institute of Mathematics (AIM) for hosting
a Workshop on Crouzeix's Conjecture at which many of the ideas in this paper originated.

\end{document}